\numberwithin{equation}{section}
\newtheorem{theorem}{Theorem}[section]
\newtheorem{lemma}[theorem]{Lemma}
\theoremstyle{definition}
\newtheorem{definition}[theorem]{Definition}
\newtheorem{example}[theorem]{Example}
\newtheorem{remark}[theorem]{Remark}
\newtheorem{problem}[theorem]{Problem}
\newcommand\Supp{\operatorname{Supp}}
\newcommand\Ass{\operatorname{Ass}}
\newcommand\Assh{\operatorname{Assh}}
\newcommand\Ann{\operatorname{Ann}}
\newcommand\Tor{\operatorname{Tor}}
\newcommand\Hom{\operatorname{Hom}}
\newcommand\Ext{\operatorname{Ext}}
\newcommand\Rad{\operatorname{Rad}}
\newcommand\inj{\operatorname{inj}}
\newcommand\height{\operatorname{height}}
\newcommand\grade{\operatorname{grade}}
\newcommand{\qism}{\stackrel{\sim}{\longrightarrow}}
\begin{document}
\author[W. Mahmood \and P. Schenzel]{Waqas Mahmood \quad \and \quad Peter Schenzel}
\title[On invariants of local cohomology]{On invariants and endomorphism rings of certain local cohomology modules}

%\date{\today}   % Standard: Datum der Kompilierung ("\today").
\address{Abdus Salam School of Mathematical Sciences, GCU, Lahore Pakistan}%
\email{ waqassms$@$gmail.com}

\address{Martin-Luther-Universit\"at Halle-Wittenberg,
Institut f\"ur Informatik, D --- 06 099 Halle (Saale),
Germany}
\email{peter.schenzel@informatik.uni-halle.de}

\thanks{This research was partially supported by the Higher Education Commission, Pakistan}
\subjclass[2000]{Primary: 13D45}

\keywords{Local cohomology, Bass number, Lyubeznik number, endomorphism ring}

\begin{abstract} Let $(R,\mathfrak{m})$ denote an $n$-dimensional Gorenstein ring. For an ideal $I \subset R$
with $\grade I = c$ we
define new numerical invariants $\tau_{i,j}(I)$ as the socle dimensions
of $H^i_{\mathfrak{m}}(H^{n-j}_I(R))$. In case of a regular local ring
containing a field these numbers coincide with the Lyubeznik numbers
$\lambda_{i,j}(R/I)$. We use $\tau_{d,d}(I), d = \dim R/I,$
to characterize the surjectivity of the natural homomorphism
$f : \hat{R} \to \Hom_{\hat{R}}(H^c_{I\hat{R}}(\hat{R}),H^c_{I\hat{R}}(\hat{R}))$. As a
technical tool we study several natural homomorphisms. Moreover we prove
a few results on $\tau_{i,j}(I)$.
\end{abstract}

\maketitle

\section{Introduction} Let $(R,\mathfrak{m})$ denote a local ring. For
an ideal $I \subset R$ let $H^i_I(R), i\in \mathbb{Z},$ denote the local cohomology modules of $R$ with respect to $I$ (see \cite{Gr} and \cite{BS}
for the definitions). In recent research there is an interest in the
study of the endomorphism rings $\Hom_R(H^i_I(R), H^i_I(R))$ for certain
$i$ and $I$. This was started by Hochster and Huneke (see \cite{HH}) for
$i = \dim R$ and $I = \mathfrak{m}$. See also \cite{MS} for a generalization to an arbitrary ideal $I$ and $i = \dim R$.

The case of a cohomologically complete intersection with $i = \grade I$ was investigated at first by Hellus and St\"uckrad (see \cite{HSt}). They have shown that the natural map
\[
f : \hat{R} \to
\Hom_{\hat{R}}(H^i_{I\hat{R}}(\hat{R}),H^i_{I\hat{R}}(\hat{R}))
\]
is an isomorphism.  In the case of a Gorenstein ring $(R,\mathfrak{m})$ it follows (see \cite{Sch5}) that
$\Hom_{\hat{R}}(H^i_{I\hat{R}}(\hat{R}),H^i_{I\hat{R}}(\hat{R}))$ is a commutative ring. Here we continue
with the investigations of $f$.

\begin{theorem} \label{1.1} Let $(R,\mathfrak{m})$ denote an $n$-dimensional Gorenstein ring.
For an ideal $I \subset R$ with $\grade I = c$ and $d = \dim R/I$ the following conditions are equivalent:
\begin{itemize}
\item[(i)] $\dim_k \Hom_R(k,H^d_{\mathfrak{m}}(H^c_I(R))) = 1.$
\item[(ii)] The natural map $f : \hat{R} \to
\Hom_{\hat{R}}(H^c_{I\hat{R}}(\hat{R}),H^c_{I\hat{R}}(\hat{R}))$ is surjective.
\end{itemize}
Let $S = \cap (R\setminus \mathfrak{p})$ where the intersection is taken over all associated
prime ideal $\mathfrak{p} \in \Ass_R R/I$ with $\dim R/\mathfrak{p} = \dim R/I$. Then
$\ker f = 0R_S \cap R$. That is, if $\hat{R}$ is a domain then $\Ann H^c_{I\hat{R}}(\hat{R}) = 0$.
\end{theorem}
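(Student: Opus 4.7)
My plan is to pass to the complete case, dualize (i) via local duality into a cyclicity statement for a Noetherian module, and then align this with the surjectivity of $f$ through natural homomorphisms.

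\emph{Reduction.} Both $H^d_{\mathfrak m}(H^c_I(R))$ and the map $f$ behave well under the faithfully flat base change $R \to \hat R$: local cohomology commutes with $\mathfrak m$-adic completion, and $\Hom_R(k, M) = \Hom_{\hat R}(k, M)$ for any $\mathfrak m$-torsion module $M$. Hence I may, and will, assume $R = \hat R$ is complete. Being Gorenstein, $R$ is Cohen--Macaulay, so $c = \grade I = \height I = n - d$; in particular $c + d = n$.

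\emph{Step 1 (dualizing (i)).} Set $E := E_R(k)$ and $D := \Hom_R(-, E)$. Local duality in the complete Gorenstein ring gives the natural isomorphism
\[
\Ext^c_R\bigl(H^c_I(R), R\bigr) \;\cong\; D\bigl(H^d_{\mathfrak m}(H^c_I(R))\bigr).
\]
Writing $N := \Ext^c_R(H^c_I(R), R)$, Matlis reflexivity (using that $H^d_{\mathfrak m}(H^c_I(R))$ is Artinian) yields $D(N) \cong H^d_{\mathfrak m}(H^c_I(R))$, whence
\[
\dim_k \Hom_R\bigl(k,\, H^d_{\mathfrak m}(H^c_I(R))\bigr) \;=\; \dim_k N / \mathfrak m N.
\]
Condition (i) is therefore equivalent to $N$ being a cyclic $R$-module.

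\emph{Step 2 (matching with $f$).} The crux is to produce, via the MGM-type adjunction
\[
\RHom_R\bigl(\Rgam(R), -\bigr) \;\cong\; \Llam(-)
\]
applied to $\Rgam(R)$ and the associated spectral sequence
\[
E_2^{p,q} = \Ext^p_R\bigl(H^q_I(R), R\bigr) \;\Longrightarrow\; H^{p+q}\bigl(\RHom_R(\Rgam(R), R)\bigr),
\]
a natural homomorphism $\varphi : \Hom_R(H^c_I(R), H^c_I(R)) \to N$ such that $\varphi \circ f$ is the canonical map $R \to N$ sending $1$ to the class $[\mathrm{id}]$. The key auxiliary lemma is that $\varphi$ is surjective (equivalently, that $\Coker f$ surjects onto $\Coker(R \to N)$ via an isomorphism). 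Granted this, $f$ is surjective iff $R \to N$ is surjective iff $N$ is cyclic iff (i) holds.

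\emph{Step 3 (the kernel).} We have $\ker f = \Ann_R H^c_I(R)$. For each $\mathfrak p \in \Assh R/I$, we have $\height \mathfrak p = c$ and $I R_{\mathfrak p}$ is $\mathfrak p R_{\mathfrak p}$-primary, so
\[
H^c_I(R)_{\mathfrak p} \;=\; H^c_{\mathfrak p R_{\mathfrak p}}(R_{\mathfrak p}) \;=\; E_{R_{\mathfrak p}}(k(\mathfrak p))
\]
is a faithful $R_{\mathfrak p}$-module. Thus $\Ann_R H^c_I(R) \subseteq \bigcap_{\mathfrak p \in \Assh R/I} \ker(R \to R_{\mathfrak p}) = 0 R_S \cap R$. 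The reverse inclusion rests on the structural fact $\Ass_R H^c_I(R) \subseteq \bigcup_{\mathfrak p \in \Assh R/I} V(\mathfrak p)$, which makes every $s \in S$ a non-zero-divisor on $H^c_I(R)$; then $sr = 0$ in $R$ forces $r H^c_I(R) = 0$. If $\hat R$ is a domain, $0 R_S \cap R = 0$, giving the final corollary.

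\emph{Main obstacle.} Step 2 is the technical heart: verifying that $\varphi$ is well-defined, that $\varphi \circ f$ is the canonical map $R \to N$, and that $\varphi$ is surjective. This is where the ``several natural homomorphisms'' promised in the abstract do the real work.
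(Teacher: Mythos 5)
Your route is essentially the paper's: dualize condition (i) through Local Duality, identify the target of $f$ with $N := \Ext^c_R(H^c_I(R), R)$, and reduce to cyclicity of $N$. But the piece you flag as the ``main obstacle'' really is the substance of the argument, and as you have set it up there are two problems. First, surjectivity of $\varphi$ is not the right demand: to run both directions of the equivalence you need the natural map $\Hom_R(H^c_I(R), H^c_I(R)) \to N$ to be an \emph{isomorphism} --- a surjection with nonzero kernel not contained in the image of $f$ would allow $N$ to be cyclic while $f$ fails to be onto. Your parenthetical ``equivalently, $\Coker f \to \Coker(R \to N)$ is an isomorphism'' is strictly stronger than surjectivity of $\varphi$, not equivalent to it. The paper obtains the isomorphism cleanly from Lemma~\ref{2.3} (citing \cite[Theorem 2.3]{Sch3}): $\Hom_R(X, H^c_I(R)) \cong \Ext^c_R(X, R)$ for $\Supp X \subseteq V(I)$ and $\grade I = c$, applied with $X = H^c_I(R)$; no spectral sequence or MGM adjunction is needed. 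Second, the Artinianness of $M := H^d_{\mathfrak m}(H^c_I(R))$ is \emph{not} automatic --- Example~\ref{5.4} shows the socle can have infinite dimension --- so you cannot invoke Matlis reflexivity outright. It does hold \emph{a posteriori} in each direction: under (i), $M$ embeds in $E$; under (ii), $N$ is cyclic, hence $DD(M)$ is Artinian, hence so is its submodule $M$. The paper avoids the issue by proving instead that $S := \Hom_R(H^c_I(R), H^c_I(R))$ is $\mathfrak m$-adically complete (an inverse-limit, Matlis-dual computation) and citing \cite[Theorem 8.4]{M} for finite generation and cyclicity. You also need to check that $1 \mapsto \operatorname{id}$ hits a generator once $N$ is cyclic; Nakayama (as $S$ is nonzero and finitely generated, so $\mathfrak m S \neq S$) or the commutativity of $S$ from Remark~\ref{3.1}(B) gives this.

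For the kernel, the structural fact you cite, $\Ass_R H^c_I(R) \subseteq \bigcup_{\mathfrak p \in \Assh R/I} V(\mathfrak p)$, is too weak: it is trivially true (every associated prime contains $I$, hence some minimal prime of $I$), but it does not prevent a $\mathfrak q \in \Ass_R H^c_I(R)$ with $\mathfrak q \supsetneq \mathfrak p$ from meeting $S$. What you actually need is $\Ass_R H^c_I(R) \subseteq \Assh R/I$ (equivalently: every such $\mathfrak q$ is \emph{contained in} some $\mathfrak p \in \Assh R/I$, which together with $I \subseteq \mathfrak q$ forces $\mathfrak q = \mathfrak p$). That containment is true but needs proof; the paper simply quotes $\ker f = u(I\hat R)$ from \cite[Theorem 3.2]{Sch5} in Lemma~\ref{4.2}.
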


As a consequence of Theorem \ref{1.1} there is a numerical condition for $f$ being an isomorphism.
In general the socle dimension of $H^d_{\mathfrak{m}}(H^c_I(R))$ is not finite. In case of $(R,\mathfrak{m})$
a regular local ring containing a field it is known that $\dim_k \Hom_R(k, H^d_{\mathfrak{m}}(H^c(R)))$
is equal to the Lyubeznik number $\dim_k \Ext_R^d(k, H^c_I(R))$. Another part of these investigations
is the relation of the Bass-Lyubeznik number to the socle dimension of $H^d_{\mathfrak{m}}(H^c_I(R))$. To
this end we consider several re-interpretations of the natural map$f$ by Local and Matlis Duality resp.
This is related to the problem whether the natural homomorphism $\Ext_R^d(k, H^c_I(R)) \to k$ is non-zero, a
question originally posed by the second author and Hellus (see \cite{HeS}). Under the additional
assumption of $\inj\dim_R H^c_I(R) \leq d$ we are able to prove that if $\Ext_R^d(k,H^c_I(R)) \to k$
is non-zero (resp. isomorphic), then $f$ is injective (resp. isomorphic). Note that if $(R,\mathfrak{m})$ is a regular local ring
containing a field $\inj\dim_R H^c_I(R) \leq d$ holds by the results of Huneke and Sharp resp. Lyubeznik
(see \cite{HS} resp. \cite{Ly}). It is unknown to us whether this is true for any regular local ring.
In the final section we discuss the $\tau$-numbers in more detail.

\section{Preliminaries}
Let $(R,\mathfrak m)$ denote a (local) Gorenstein ring of dimension $n$.
Let $I\subset R$ be an ideal of $\grade(I) = c.$ Then we have that $\height(I)= c$ and $d := \dim_R(R/I) = n-c$. For the definition and basic
results on local cohomology theory we refer to \cite{Gr} and the textbook
\cite{BS}. In the following we need a slight extension of the Local Duality Theorem as its was proved at first by Grothendieck (see \cite{Gr}).

For an arbitrary local ring $(R,\mathfrak{m})$ let $\hat{R}$ denote the
$\mathfrak{m}$-adic completion. Moreover $E = E_R(k), k = R/\mathfrak{m}$,
denotes the injective hull of the residue field $k$.

Let $\underline{x}= x_1, \ldots ,x_r\in I$ denote a system of elements of $R$ such that $\Rad I= \Rad(\underline{x})R$. We consider the \v{C}ech complex $\Check{C}_{\underline{x}}$ with respect to $\underline{x}= x_1,...,x_r$ (see \cite{BS} and also \cite{Sch1} for its definition). Then
\[
H^i(\Check{C}_{\underline{x}}\otimes_R M)\simeq H^i_I(M)
\]
for $i\in \mathbb Z$ and an $R$-module $M$, where $H^i_I(M), i \in \mathbb{Z},$ denote the local cohomology of $M$ with support in $I$.
%The following result seems to be
%known. Because it will be one of our important arguments we include a proof.

\begin{lemma} \label{2.1}
Let $(R,\mathfrak m)$ denote a  Gorenstein ring. Let $M$ denote an
arbitrary $R$-module. Then there are the following natural isomorphisms
\begin{itemize}
\item[(a)]
$\Tor_{n-i}^R(M, E) \simeq H^i_{\mathfrak{m}}(M)$ and
\item[(b)]
$\Hom_R(H^i_{\mathfrak{m}}(M), E) \simeq \Ext^{n-i}_R(M, {\hat{R}})$
\end{itemize}
for all $i\in \mathbb Z$.
\end{lemma}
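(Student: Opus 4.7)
The plan is to reduce both parts to the following identification, valid because $R$ is Gorenstein of dimension $n$: for any system of parameters $\underline{y} = y_1, \ldots, y_n$ of $\mathfrak{m}$, the \v{C}ech complex $\check{C}_{\underline{y}}$ is a bounded complex of flat $R$-modules whose cohomology is concentrated in degree $n$ and equals $H^n_{\mathfrak{m}}(R) \simeq E$. Equivalently, there is a quasi-isomorphism $\check{C}_{\underline{y}}[n] \qism E$, giving a flat resolution of $E$ that can be used to compute derived functors in both slots.

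For part (a), since the terms of $\check{C}_{\underline{y}}$ are flat, the derived tensor product $M \lo_R E$ is represented by the ordinary complex $M \otimes_R \check{C}_{\underline{y}}[n]$. Taking $(n-i)$-th homology and keeping track of the shift yields
\[
\Tor_{n-i}^R(M, E) \simeq H^i(M \otimes_R \check{C}_{\underline{y}}) \simeq H^i_{\mathfrak{m}}(M),
\]
where the last step is the \v{C}ech-complex description of local cohomology applied to $M$.

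For part (b) the strategy is to combine part (a) with derived tensor--hom adjunction and Matlis duality. Starting from the same flat resolution,
\[
\RHom_R(M \lo_R E, E) \simeq \RHom_R(M, \RHom_R(E, E)) \simeq \RHom_R(M, \hat{R}),
\]
using that $E$ is injective, so $\RHom_R(E, E) = \Hom_R(E, E) \simeq \hat{R}$ by Matlis duality. On the left, exactness of $\Hom_R(-, E)$ converts cohomology into Matlis duals of Tor, giving $H^k \RHom_R(M \lo_R E, E) \simeq \Hom_R(\Tor_k^R(M, E), E)$. Setting $k = n - i$ and invoking part (a) yields the claimed isomorphism.

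The main subtlety to watch is that $M$ is arbitrary, not assumed finitely generated, which is precisely why $\hat{R}$ rather than $R$ itself appears in (b): completion enters exactly through the Matlis-duality identification $\Hom_R(E, E) \simeq \hat{R}$. Beyond this, the derived-category bookkeeping is routine once the flat resolution of $E$ is in hand.
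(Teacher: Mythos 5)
Your proposal is correct and follows essentially the same route as the paper's proof: both use the Gorenstein hypothesis to turn the \v{C}ech complex on a system of parameters into a bounded flat resolution of $E$, deduce (a) from $H^i_{\mathfrak m}(M) \simeq H^i(\Check{C}_{\underline x}\otimes_R M)$, and obtain (b) by applying $\Hom_R(-,E)$ together with the Ext--Tor duality (your derived tensor--hom adjunction is exactly what the paper invokes when it says ``the duality between Ext and Tor yields $\Hom_R(\Tor_{n-i}^R(M,E),E)\simeq \Ext^{n-i}_R(M,\Hom_R(E,E))$'') and the Matlis-duality identification $\Hom_R(E,E)\simeq\hat R$. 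Your derived-category phrasing just makes the paper's adjunction step slightly more explicit.
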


\begin{proof}
Let $\underline{x}= x_1, \ldots ,x_n$ denote a system of parameters of $R$. Then $H^i_{\mathfrak m}(R) \simeq H^i(\Check{C}_{\underline{x}})$ and therefore
\[
H^i_{\mathfrak m}(R) \simeq H^i(\Check{C}_{\underline{x}}) = 0
\]
for all $i \not= n$ and $H^n_{\mathfrak m}(R)\simeq E$. Note that $R$ is
a Gorenstein ring. That is, $\Check{C}_{\underline{x}}[-n]$ provides
a flat resolution of $E.$ Whence by the definitions
\[
\Tor_{n-i}^R(M, E) \simeq H^i(\Check{C}_{\underline{x}} \otimes_R M) \simeq H^i_{\mathfrak m}(M)
\]
for all $i \in \mathbb Z$ and any $R$-module $M.$ This proves the first statement. By applying the Matlis duality functor it follows that
\[
\Hom_R(\Tor_{n-i}^R(M, E), E)\simeq \Hom_R(H^i_{\mathfrak m} (M), E)
\]
Therefore the duality beween "Ext" and "Tor"  yields the
isomorphism
\[
\Hom_R(\Tor_{n-i}^R(M, E), E)\simeq \Ext^{n-i}_R(M,\Hom_R(E, E)).
\]
By Matlis duality we get $\Hom_R(E, E)\simeq {\hat{R}}$, which completes the proof.
\end{proof}

Note that the proof of Lemma \ref{2.1} is well-known (see for example \cite{Hu}). The new aspect - we need in the following - is its validity for an arbitrary $R$-module $M$.

Now let us recall the definition of the truncation complex as it was
introduced in \cite{Sch2}. Assume that $(R, \mathfrak{m})$ is an $n$-dimensional Gorenstein ring.
Let $E^{\cdot}_R(R)$ denote the minimal injective resolution of $R.$ Let $\Gamma_I(-)$ denote the section functor with support in $I$. Then $\Gamma_I(E^{\cdot}_R(R))^i= 0$ for all $i< c$ (see \cite{Sch2} for more
details).

\begin{definition} \label{2.2}
Let $C^{\cdot}_R(I)$ be the cokernel of the embedding
$H^c_I(R)[-c]\to \Gamma_I(E^{\cdot}_R(R))$ considered as a morphism of complexes. It is called the
truncation complex of $R$ with respect to $I$. Then there ia a short exact
sequence of complexes
\[
0\to H^c_I(R)[-c]\to \Gamma_I(E^{\cdot}_R(R))\to C^{\cdot}_R(I)\to 0.
\]
In particular  $H^i(C^{\cdot}_R(I))= 0$ for all $i\leq c$ or $i> n$ and $H^i(C^{\cdot}_R(I))\simeq H^i_I(R)$ for all $c< i\leq n.$
\end{definition}

The advantage of the truncation complex is the separation of the
properties of the cohomology module $H^c_I(R)$ of those of
the other cohomology modules $H^i_I(R)$ for $i\neq c$. The truncation
complex is helpful in order to construct a few natural homomorphisms.
But as a first application of the truncation complex we have the following
result.

\begin{lemma} \label{2.3}
Let $(R,\mathfrak m)$ be a local ring. Let $X$ denote an $R$-module such
that $\Supp_R(X)\subseteq V(I)$ for an ideal $I$ of $\grade I= c$. Then
there is a natural isomorphism
\[
\Hom_R(X,H^c_{I} (R)) \simeq \Ext^c_R(X, R)
\]
and $\Ext^i_R(X, R)= 0$ for all $i < c$.
\end{lemma}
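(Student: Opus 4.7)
The plan is to apply $\RHom_R(X,-)$ to the distinguished triangle
\[
H^c_I(R)[-c] \to \Gamma_I(E^{\cdot}_R(R)) \to C^{\cdot}_R(I) \to H^c_I(R)[-c+1]
\]
coming from the truncation sequence of Definition \ref{2.2}, and to read off both assertions from the induced long exact sequence of cohomology. The support hypothesis on $X$ will enter only in interpreting the middle term of the triangle, while the right-hand term is controlled by the cohomological vanishing built into the truncation complex.

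First I would identify the middle term. Because $\Supp_R(X) \subseteq V(I)$ and every element $x\in X$ generates a finitely generated submodule whose support is still contained in $V(I)$, some power of $I$ annihilates each $x$; hence every $R$-linear map $X\to M$ factors through $\Gamma_I(M)$, and so $\Hom_R(X,M) = \Hom_R(X,\Gamma_I(M))$ for every $R$-module $M$. Applied degreewise to the minimal injective resolution $E^{\cdot}_R(R)$, and combined with the fact that $\Gamma_I$ preserves injectivity, this gives a bounded below complex of injectives together with an equality $\Hom_R(X,E^{\cdot}_R(R)) = \Hom_R(X,\Gamma_I(E^{\cdot}_R(R)))$, so that the $i$-th cohomology of this complex is $\Ext^i_R(X,R)$.

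Next I would show $H^i(\RHom_R(X,C^{\cdot}_R(I))) = 0$ for all $i \leq c$. By Definition \ref{2.2} the cohomology of $C^{\cdot}_R(I)$ is concentrated in degrees strictly greater than $c$, so the hyper-Ext spectral sequence
\[
E_2^{p,q} = \Ext^p_R(X, H^q(C^{\cdot}_R(I))) \Rightarrow H^{p+q}(\RHom_R(X,C^{\cdot}_R(I)))
\]
vanishes on the strip $q\leq c$, forcing the abutment to vanish in total degrees $\leq c$. Equivalently, one may replace $C^{\cdot}_R(I)$ by a quasi-isomorphic complex of injectives placed in degrees $\geq c+1$ and apply $\Hom_R(X,-)$ directly.

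Combining the two computations with the long exact sequence attached to the triangle,
\[
\cdots \to \Ext^{i-c}_R(X,H^c_I(R)) \to \Ext^i_R(X,R) \to H^i(\RHom_R(X,C^{\cdot}_R(I))) \to \Ext^{i+1-c}_R(X,H^c_I(R)) \to \cdots,
\]
one obtains $\Ext^i_R(X,R)=0$ for $i<c$ since both flanking groups are zero, and at $i=c$ the vanishing of $H^{c-1}$ and $H^c$ of $\RHom_R(X,C^{\cdot}_R(I))$ upgrades the natural edge map $\Hom_R(X,H^c_I(R)) \to \Ext^c_R(X,R)$ to an isomorphism; naturality in $X$ is automatic from the functoriality of $\RHom_R(-,-)$ applied to the triangle. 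The only genuinely technical step, which I expect to be the main obstacle, is the identification of $\RHom_R(X,-)$ with the naive $\Hom$-complex on each vertex of the triangle; this rests on the injectivity preservation by $\Gamma_I$ and on the existence of an injective resolution of $C^{\cdot}_R(I)$ living in degrees $\geq c+1$, both of which are standard.
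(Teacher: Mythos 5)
Your proof is correct, and the long-exact-sequence argument does deliver both assertions, but it takes a detour through machinery that isn't needed here. The paper simply cites \cite[Theorem 2.3]{Sch3}, and the standard direct argument is shorter: once you have observed (as you do) that the support hypothesis gives
\[
\Hom_R(X,E^{\cdot}_R(R)) = \Hom_R(X,\Gamma_I(E^{\cdot}_R(R))),
\]
you may finish without any triangle or spectral sequence. Indeed $\Gamma_I(E^i_R(R)) = 0$ for $i < c$ immediately forces $\Ext^i_R(X,R) = 0$ for $i < c$, and left-exactness of $\Hom_R(X,-)$ applied to the inclusion $H^c_I(R) = \ker\bigl(\Gamma_I(E^c) \to \Gamma_I(E^{c+1})\bigr)$ gives
\[
\Ext^c_R(X,R) = \ker\bigl(\Hom_R(X,\Gamma_I(E^c)) \to \Hom_R(X,\Gamma_I(E^{c+1}))\bigr) \simeq \Hom_R(X,H^c_I(R)),
\]
naturally in $X$. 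Your route instead brings in the truncation complex and the hyper-Ext spectral sequence; this works, but note two cautions. First, Definition \ref{2.2} of the paper introduces $C^{\cdot}_R(I)$ only under a Gorenstein hypothesis, while Lemma \ref{2.3} is stated for an arbitrary local ring; you would need to remark that the construction and the vanishing $\Gamma_I(E^{\cdot}_R(R))^i = 0$ for $i<c$ require only $\grade I = c$, not Gorenstein. Second, $C^c_R(I) = \Gamma_I(E^c)/H^c_I(R)$ is not itself injective, so the phrase ``replace $C^{\cdot}_R(I)$ by a quasi-isomorphic complex of injectives in degrees $\geq c+1$'' is doing real work and deserves an explicit sentence (take a good truncation and then an injective resolution); the spectral-sequence version you give is cleaner. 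Both approaches buy the same thing, but the direct one exposes that the entire content of the lemma is already contained in your ``identification of the middle term.''
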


\begin{proof} For the proof see  \cite[Theorem 2.3]{Sch3}.
\end{proof}

The truncation complex provides a few natural maps that arise also
as edge homomorphisms of certain spectral sequences.

\begin{lemma} \label{2.4}
With the previous notation and Definition \ref{2.2} there are the following natural homomorphisms
\begin{itemize}
\item[(a)] $H^d_{\mathfrak m}(H^c_I(R))\to E$,
\item[(b)] ${\hat{R}}\to \Ext^{c}_R(H^c_I(R), {\hat{R}})$,
\item[(c)] $\Tor_{c}^R(E,H^c_I(R))\to E$,
\item[(d)] $\Ext^{d}_R(k,H^c_I(R))\to k$, and
\item[(e)] $\Tor_{c}^R(k,H^c_I(R))\to k$.
\end{itemize}
\end{lemma}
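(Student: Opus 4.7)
The uniform strategy is to apply an appropriate functor $F$ to the short exact sequence of complexes in Definition \ref{2.2}, and then read off the desired homomorphism from the resulting long exact cohomology sequence. The leftmost term $H^c_I(R)[-c]$ contributes, in cohomological degree $i$, the module $H^{i-c}(F(H^c_I(R)))$, and the middle term $\Gamma_I(E^{\cdot}_R(R)) \simeq R\Gamma_I(R)$ collapses to something very simple under each $F$ because $R$ is Gorenstein and $\mathfrak{m} \supseteq I$. In each case the map we want will be the edge map from the left term to the middle term in the correct cohomological degree.

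First I would handle (a) by applying $F = R\Gamma_{\mathfrak{m}}$. Since $\mathfrak{m} \supseteq I$, one has $\Gamma_{\mathfrak{m}} \circ \Gamma_I = \Gamma_{\mathfrak{m}}$, hence $R\Gamma_{\mathfrak{m}}(\Gamma_I(E^{\cdot}_R(R))) \simeq R\Gamma_{\mathfrak{m}}(R)$; because $R$ is Gorenstein of dimension $n$ this complex has cohomology $E$ concentrated in degree $n = c+d$. Extracting cohomology in degree $n$ of the long exact sequence produces the edge map $H^d_{\mathfrak{m}}(H^c_I(R)) \to E$, which is (a). Statement (b) I would then obtain from (a) by Matlis dualization: Lemma \ref{2.1}(b) identifies $\Hom_R(H^d_{\mathfrak{m}}(H^c_I(R)), E)$ with $\Ext^c_R(H^c_I(R), \hat R)$, while $\Hom_R(E, E) \simeq \hat R$, so the dual of (a) reads $\hat R \to \Ext^c_R(H^c_I(R), \hat R)$. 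Statement (c) is essentially a restatement of (a): by Lemma \ref{2.1}(a) we have $\Tor^R_c(E, H^c_I(R)) \simeq H^d_{\mathfrak{m}}(H^c_I(R))$, and the map (a) transports along this isomorphism.

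For (d) I would apply $F = R\Hom_R(k, -)$. Each $\Gamma_I(E^j_R(R))$ is injective, and since $k$ is annihilated by $\mathfrak{m} \supseteq I$ hence $I$-torsion, one has $\Hom_R(k, \Gamma_I(E^j_R(R))) = \Hom_R(k, E^j_R(R))$; thus $R\Hom_R(k, \Gamma_I(E^{\cdot}_R(R))) \simeq R\Hom_R(k, R)$, which is $k$ concentrated in cohomological degree $n$ by the Gorenstein property. Taking cohomology in degree $n$ of the long exact sequence yields $\Ext^d_R(k, H^c_I(R)) \to k$. For (e) the same idea works with $F = k \otimes^L_R -$: using the \v Cech description, $R\Gamma_I(R) \otimes^L_R k \simeq \Cech \otimes_R k$, whose cohomology is $H^i_I(k)$; but $k$ is $I$-torsion, so this is $k$ in degree $0$ and $0$ elsewhere. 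Reading the long exact sequence in cohomological degree $0$, and observing that $H^0(H^c_I(R)[-c] \otimes^L_R k) \simeq \Tor^R_c(k, H^c_I(R))$, gives the desired map $\Tor^R_c(k, H^c_I(R)) \to k$.

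The main technical point — and the only one requiring care — is the degree bookkeeping when tensoring or Hom-ing the shifted complex $H^c_I(R)[-c]$: one must verify that cohomological degree $n$ (resp.\ $0$) of $F(H^c_I(R)[-c])$ is indeed $F^d(H^c_I(R))$ (resp.\ $F_c(H^c_I(R))$) as asserted. Everything else is a formal application of the long exact sequence of hypercohomology attached to the triangle of Definition \ref{2.2}, combined with the Gorenstein identifications of $R\Gamma_{\mathfrak{m}}(R)$ and $R\Hom_R(k, R)$, and the fact that $k$ (being $I$-torsion) is unchanged by $R\Gamma_I$.
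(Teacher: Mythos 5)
Your proposal is correct and follows essentially the same route as the paper: apply the relevant derived functor ($R\Gamma_{\mathfrak m}$, Matlis/Local duality, $R\Hom_R(k,-)$, $k\otimes^{\mathrm L}_R-$) to the truncation triangle of Definition \ref{2.2}, use the Gorenstein hypothesis together with $\Gamma_{\mathfrak m}\circ\Gamma_I=\Gamma_{\mathfrak m}$ and the $I$-torsionness of $k$ to collapse the middle term, and read off the edge map. The paper merely phrases the same construction at the chain level --- tensoring with the \v{C}ech complex of a system of parameters for (a)--(c) and with a free resolution $F_{\cdot}$ of $k$ for (d)--(e) --- rather than in derived-category language, and derives (b), (c) from (a) by Local and Matlis duality exactly as you do.
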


\begin{proof}
Let $\underline{x}= x_1, \ldots,x_n\in {\mathfrak m}$ denote a system of
parameters of $R$. Let $\Check{C}_{\underline{x}}$ denote the \v{C}ech complex with respect to
$\underline{x}$. It induces a homomorphism of complexes of $R$-modules
\[
\Check{C}_{\underline{x}}\otimes_{R} H^c_I(R)[-c]\to \Check{C}_{\underline{x}}\otimes_{R} \Gamma_I(E^{\cdot}_R(R))
\]
By view of \cite[Theorem 3.2]{Sch4} we have a morphism of complexes
\[
\Gamma_{\mathfrak m}(\Gamma_I(E^{\cdot}_R(R))) \to \Check{C}_{\underline{x}}\otimes_{R} \Gamma_I(E^{\cdot}_R(R))
\]
that induces an isomorphism in cohomology.
Moreover there is an identity of functors $\Gamma_{\mathfrak m}(\Gamma_I(-))= \Gamma_{\mathfrak m}(-)$. Because $R$ is a Gorenstein ring it follows that $\Gamma_{\mathfrak m}(E^{\cdot}_R(R))\simeq E[-n]$.  Therefore the above
homomorphism considered in degree $n$ induces the homomorphism in $(a)$.

By view of the Local Duality (see \ref{2.1}) we have the isomorphism
\[
H^d_{\mathfrak m}(H^c_I(R))\simeq \Tor_{c}^R(E,H^c_I(R)).
\]
Therefore the homomorphism in $(a)$ implies the homomorphism in $(c)$.

Again by Local Duality (see \ref{2.1}) there is the isomorphism
\[
\Ext^{c}_R(H^c_I(R), {\hat{R}})\simeq \Hom_R(H^d_{\mathfrak m}(H^c_I(R)), E)
\]
By the Matlis Duality it follows that the homomorphism in $(a)$ provides the homomorphism in $(b)$.

Now let $F_{\cdot}$ denote a free resolution of the residue field $k$.
Then the truncation complex induces -- by tensoring with $F_{\cdot}$ --
the following morphism of complexes of $R$-modules
\[
(F_{\cdot}\otimes_{R} H^c_I(R))[-c]\to
F_{\cdot}\otimes_{R}\Gamma_I(E^{\cdot}_R(R)).
\]
Now let $\underline{y} = y_1,\ldots, y_r$ a generating set for the ideal $I$.
Let $\Check{C}_{\underline{y}}$ denote the \v{C}ech complex with respect
to $\underline{y}$. Because of the natural morphisms  $\Gamma_I(E^{\cdot}_R(R)) \to
\Check{C}_{\underline{y}}\otimes_{R} E^{\cdot}_R(R)$ (see \cite{Sch4}) induces an isomorphism in cohomology.
We get the quasi-isomorphism of complexes
\[
F_{\cdot}\otimes_{R} \Gamma_I(E^{\cdot}_R(R))\qism F_{\cdot}\otimes_{R} \Check{C}_{\underline{y}}\otimes_{R} E^{\cdot}_R(R)
\]
But now we have a quasi-isomorphism $F_{\cdot}\otimes_{R} \Check{C}_{\underline{y}} \qism F_{\cdot}$. Then the homology in degree $0$ induces the homomorphism in $(e)$. A similar argument with $\Hom_R(F_{\cdot}, -)$ implies the homomorphism in $(d)$. This completes the proof.
\end{proof}

\section{Natural homomorphisms}
Let $M$ denote an $R$-module. Then there is the natural homomorphism
\[
\Phi: R\to \Hom_R(M,M), r\to f_r
\]
where $f_r: M\to M$, $m\to rm$, for all $m\in M$ and $r\in R.$ This homomorphism of $R$ into the endomorphism ring is in general neither injective nor surjective. Of course the endomorphism ring is in general not a commutative ring.

\begin{remark} \label{3.1}
(A) For a local Gorenstein ring $(R, \mathfrak m)$ we use the above notations and conventions. By view of Lemma \ref{2.4} there are the following natural homomorphisms
\begin{itemize}
\item[]
$\varphi_1: H^d_{\mathfrak m}(H^c_I(R))\to E$,
\item[]
$\varphi_2: \Tor_{c}^R(E,H^c_I(R))\to E$,
\item[]
$\varphi_3: {\hat{R}}\to \Ext^{c}_R(H^c_I(R), {\hat{R}})$, and
\item[]
$\varphi_4: {\hat{R}}\to \Hom_{\hat{R}}(H^c_{I{\hat{R}}}({\hat{R}}), H^c_{I{\hat{R}}}({\hat{R}}))$.
\end{itemize}

(B) It is known (see \cite[Theorem 3.2 (a)]{Sch5}) that $S =
\Hom_R(H^c_I(R),H^c_I(R))$ is a commutative ring for an ideal $I$
of a Gorenstein ring $(R,\mathfrak{m})$ and $c = \grade I$.
\end{remark}

In the following it will be our intention to investigate the homomorphisms
of Remark \ref{3.1} in more detail. In particular we have the following
result:

\begin{theorem} \label{3.2} Let $I \subset R$ denote an ideal of height
$c$ of a Gorenstein ring $(R,\mathfrak{m})$. With the previous notation
we have:
\begin{itemize}
\item[(a)] $\varphi_i$, $i= 1,\ldots,4$, are all non-zero homomorphisms.
\item[(b)] The following conditions are equivalent:
\begin{itemize}
\item[(i)] $\varphi_1$ is injective (resp. surjective).
\item[(ii)] $\varphi_2$ is injective (resp. surjective).
\item[(iii)] $\varphi_3$ is surjective (resp. injective).
\item[(iv)] $\varphi_4$ is surjective (resp. injective).
\end{itemize}
\item[(c)] The natural map $\psi : \Hom_R(k,H^d_{\mathfrak{m}}(H^c_I(R))) \to k$ is non-zero and therefore onto.
\end{itemize}
\end{theorem}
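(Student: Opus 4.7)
The plan is to address the three parts in sequence, using Matlis duality and the natural identifications from Lemmas~\ref{2.1} and~\ref{2.3}. For part~(a), I would first observe that $\varphi_4(1)$ is the identity endomorphism of $H^c_{I\hat{R}}(\hat{R})$, which is non-zero since $\grade I = c$ forces $H^c_I(R) \neq 0$. Non-vanishing of $\varphi_1, \varphi_2, \varphi_3$ then propagates through the natural identifications used in part~(b): $\varphi_3$ is identified with $\varphi_4$, $\varphi_1$ is Matlis dual to $\varphi_3$ (and Matlis duality via the cogenerator $E$ preserves non-vanishing), and $\varphi_2$ is identified with $\varphi_1$.

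For part~(b), I would establish three identifications carrying the $\varphi_i$ to each other. Lemma~\ref{2.1}(a) gives $H^d_{\mathfrak{m}}(H^c_I(R)) \simeq \Tor^R_c(E, H^c_I(R))$, carrying $\varphi_1$ to $\varphi_2$. Lemma~\ref{2.3} applied over $\hat{R}$ gives $\Ext^c_{\hat{R}}(H^c_{I\hat{R}}(\hat{R}), \hat{R}) \simeq \Hom_{\hat{R}}(H^c_{I\hat{R}}(\hat{R}), H^c_{I\hat{R}}(\hat{R}))$, carrying $\varphi_3$ to $\varphi_4$. Matlis duality $\Hom_R(-, E)$ carries $\varphi_1$ to $\varphi_3$; since $E$ is an injective cogenerator, this duality swaps injective with surjective, yielding the equivalences between the $\varphi_1, \varphi_2$-conditions and the $\varphi_3, \varphi_4$-conditions.

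For part~(c), I would translate the question via Matlis duality. Setting $S := \Hom_R(H^c_I(R), H^c_I(R))$ (a commutative ring by Remark~\ref{3.1}(B)), the identifications give $\Hom_R(H^d_{\mathfrak{m}}(H^c_I(R)), E) \simeq S$ with $\varphi_1$ corresponding to the identity $1_S$. For any $f \in S \simeq \Hom_R(H^d_{\mathfrak{m}}(H^c_I(R)), E)$ and any $g \in \Hom_R(k, H^d_{\mathfrak{m}}(H^c_I(R)))$, the composite $f \circ g$ lies in $\Hom_R(k, E) = k$; moreover, $f \in \mathfrak{m} S$ forces $f \circ g = 0$ since $f(g(1))$ lies in $\mathrm{soc}(E) = k$, which is annihilated by $\mathfrak{m}$. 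This yields a factorization through $S/\mathfrak{m} S$, so $\psi$ is the image of $\overline{1_S}$ under the resulting map $S/\mathfrak{m} S \to \Hom_k(\Hom_R(k, H^d_{\mathfrak{m}}(H^c_I(R))), k)$. Hence $\psi \neq 0$ is implied by $1_S \notin \mathfrak{m} S$.

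The main obstacle is to show $\mathfrak{m} S \subsetneq S$. This follows once $S$ is known to be a commutative local $\hat{R}$-algebra with $\varphi_4: \hat{R} \to S$ a local ring homomorphism, so that $\mathfrak{m} S$ lies inside the unique maximal ideal of $S$; the local-ring structure for $S$ in this generality is established in~\cite{Sch5}. Surjectivity of $\psi$ onto $k$ then follows from $\psi \neq 0$ and $\dim_k k = 1$.
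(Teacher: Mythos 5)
Your treatment of parts (a) and (b) matches the paper's: you start from $\varphi_4(1) = \operatorname{id} \neq 0$, identify $\varphi_3$ with $\varphi_4$ via Lemma~\ref{2.3} applied over $\hat{R}$, identify $\varphi_1$ with $\varphi_2$ via Lemma~\ref{2.1}, and link the two pairs by Matlis duality, which swaps injective and surjective since $E$ is an injective cogenerator. That is exactly the argument in the paper.

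For part (c) your overall strategy is again the paper's: Matlis-dualize $\psi$ to the reduction $\varphi_4 \otimes_R k : k \to S/\mathfrak{m}S$ and show that $\overline{1_S} \neq 0$. The gap lies in how you justify $1_S \notin \mathfrak{m}S$. You invoke that $S$ is a local ring with $\varphi_4$ a local homomorphism, attributing this to \cite{Sch5}; but the paper cites \cite{Sch5} only for commutativity of $S$, and locality of $S$ for an arbitrary ideal in a Gorenstein ring is not available in that generality (indeed, even finite generation and cyclicity of $S$ over $\hat{R}$ are the content of Theorem~\ref{4.3}, and only under the extra hypothesis $\tau_{d,d}(I)=1$, so locality should not be taken for granted). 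The locality assumption is also unnecessary: from the identification $S \simeq \Hom_R(H^d_{\mathfrak{m}}(H^c_I(R)), E)$, tensoring with $k$ and using the Hom--tensor swap for finitely presented modules gives $S/\mathfrak{m}S \simeq \Hom_R(\Hom_R(k, H^d_{\mathfrak{m}}(H^c_I(R))), E)$. Since $S \neq 0$ (it contains $\operatorname{id}$), faithfulness of Matlis duality yields $H^d_{\mathfrak{m}}(H^c_I(R)) \neq 0$; as that module is $\mathfrak{m}$-torsion, its socle $\Hom_R(k, H^d_{\mathfrak{m}}(H^c_I(R)))$ is non-zero, hence $S/\mathfrak{m}S \neq 0$ and its identity $\overline{1_S}$ is non-zero. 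This is what the paper's terse ``the reduction modulo $\mathfrak{m}$ induces a non-zero homomorphism'' implicitly rests on; replacing your appeal to locality of $S$ by this short duality chain closes the gap.
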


\begin{proof}
First of all we prove that there is a natural isomorphism
\[
\Ext^{c}_R(H^c_I(R), {\hat{R}})\simeq \Hom_{\hat{R}}(H^c_{I{\hat{R}}}({\hat{R}}), H^c_{I{\hat{R}}}({\hat{R}}))
\]
This follows from Lemma \ref{2.3} since
\[
\Ext^{c}_R(H^c_I(R), {\hat{R}})\simeq \Ext^{c}_{\hat{R}}(H^c_{I{\hat{R}}}(\hat{R}), {\hat{R}})
\]
and ${\hat{R}}$ is a flat $R$-module. Moreover we have $c= \grade I{\hat{R}}.$

Because $\varphi_4$ is non-zero and the previous isomorphism is natural we get that $\varphi_3$ is non-zero too. By Lemma \ref{2.1} the
module $\Ext^{c}_R(H^c_I(R), {\hat{R}})$ is the Matlis dual
of $H^d_{\mathfrak{m}}(H^c_I(R))$. So $\varphi_1$ is also a non-zero
homomorphism. By Local Duality (see Lemma \ref{2.1}) the last
module is isomorphic to $\Tor_c^R(H^c_I(R),E)$. This finally
proves also that $\varphi_2 \not= 0$. Therefore the statement in (a) is shown to be true.
With the same arguments the equivalence of the conditions in the
statement $(b)$ follows by Matlis Duality.

For the proof of (c) first note that $\varphi_4$ is non-zero and $1 \mapsto
\operatorname{id}$. So the reduction modulo $\mathfrak{m}$ induces a non-zero
homomorphism
$
 k \to k \otimes \Hom_{\hat{R}}(H^c_{I{\hat{R}}}({\hat{R}}), H^c_{I{\hat{R}}}({\hat{R}})).
$
But this is the Matlis dual of $\psi$ as it follows by Local Duality.
\end{proof}

In order to investigate the natural homomorphisms (d), (e) of Lemma \ref{2.4}
we need a few more preliminaries.
First note that by Lemma \ref{2.4} (see also \cite[Section 6]{Sch5}) there is a commutative diagram
\[
\begin{array}{ccc}
  \Ext^d_R(k, H^c_I(R)) &  \to & k \\
  \downarrow &   & \downarrow \\
   H^d_{\mathfrak m}(H^c_I(R)) & \to & E
\end{array}
\]
here the right vertical morphism is - by construction - the natural inclusion and the left vertical morphism is the direct limit of the natural homomorphisms
\[
\lambda_\alpha: \Ext^d_R(k, K(R/I^\alpha))\to H^d_{\mathfrak m}(K(R/I^\alpha))
\]
for all $\alpha\in \mathbb{N}$, where $K(M)$ denote the canonical module of $M$. Then it induces the following commutative diagram
\[
\begin{array}{ccc}
  \Ext^d_R(k, H^c_I(R)) &  \stackrel{\varphi}{\to} & k \\
  \lambda \downarrow &   & \| \\
   \Hom_R(k, H^d_{\mathfrak m}(H^c_I(R))) & \stackrel{\psi}{\to} & k
\end{array}
\]
It is of some interest to investigate the homomorphism $\varphi$.
In fact, it was conjectured that $\varphi$ is in general non-zero. This
was shown to be true for $R$ a regular local ring containing a field (see
\cite[Theorem 1.3]{Sch5}) while $\psi$ is non-zero in a Gorenstein ring
as shown above (see Theorem \ref{3.2} (c)).

For our purposes here let us summarize the main results of
Huneke and Sharp resp. Lyubeznik (see \cite{HS} and \cite{Ly}).

\begin{theorem} \label{3.5}
Let $(R, \mathfrak m)$ be a regular local ring containing a field. Let
$I\subset R$ be an ideal of grade $c$. Then for all $i,j \in \mathbb Z$
the following results are true:
\begin{itemize}
\item[(a)]  $H^j_{\mathfrak m}(H^{i}_I(R))$ is an injective $R$-module.
\item[(b)] $\inj \dim_R(H^{i}_I(R))\leq \dim_R(H^{i}_I(R))\leq \dim R-i.$
\item[(c)] $\Ext^j_R(k, H^i_I(R))\simeq \Hom_R(k, H^j_{\mathfrak m}(H^{i}_I(R)))$
and its dimension is finite.
\end{itemize}
\end{theorem}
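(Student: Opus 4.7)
The plan is to follow the structural approach of Huneke--Sharp in positive characteristic and of Lyubeznik in characteristic zero. In both settings the essential input is that the local cohomology module $H^i_I(R)$ lies in a category with very strong finiteness properties: the category of $F$-modules when $R$ contains a field of characteristic $p > 0$, and the category of holonomic $\mathcal{D}$-modules when $R$ contains a field of characteristic zero. Either category is closed under the section functor $\Gamma_{\mathfrak{m}}(-)$ and under the formation of iterated local cohomology, and every module in it has finite Bass numbers with respect to every prime ideal.

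First I would establish (a) together with the finiteness clause of (c). Since $H^j_{\mathfrak{m}}(H^i_I(R))$ again lies in the same category and is $\mathfrak{m}$-torsion, the structural theorems force it to be a direct sum of copies of $E = E_R(k)$; in particular it is injective over $R$. Consequently $\Hom_R(k, H^j_{\mathfrak{m}}(H^i_I(R)))$ is a $k$-vector space whose dimension equals the Bass number $\mu^j(\mathfrak{m}, H^i_I(R))$, which is finite by the same theorems. For (b), the bound $\dim_R H^i_I(R) \leq \dim R - i$ follows by flat base change and Grothendieck's vanishing: if $\mathfrak{p} \in \Supp_R H^i_I(R)$ then $H^i_{IR_{\mathfrak{p}}}(R_{\mathfrak{p}}) \neq 0$, so $\dim R_{\mathfrak{p}} \geq i$; since $R$ is regular it is Cohen--Macaulay and $\dim R/\mathfrak{p} \leq \dim R - i$. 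The finer inequality $\inj\dim_R H^i_I(R) \leq \dim_R H^i_I(R)$ is itself a main structural result of \cite{HS} and \cite{Ly}.

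The isomorphism in (c) is then a formal consequence of (a). The Grothendieck composed-functor spectral sequence
\[
E_2^{p,q} = \Ext^p_R(k, H^q_{\mathfrak{m}}(H^i_I(R))) \Rightarrow \Ext^{p+q}_R(k, H^i_I(R))
\]
applies because $k$ is $\mathfrak{m}$-torsion, so that $\operatorname{R}\Hom_R(k, -) \simeq \operatorname{R}\Hom_R(k, \operatorname{R}\Gamma_{\mathfrak{m}}(-))$. The injectivity from (a) then kills every column with $p \geq 1$, so the sequence collapses on the $q$-axis and gives the desired natural identification in degree $j$.

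The main obstacle in the whole programme is the deep structural input that $H^i_I(R)$ is an $F$-module, respectively a holonomic $\mathcal{D}$-module, and that in both cases the subcategory of $\mathfrak{m}$-torsion objects consists of injective modules of the form $E^{(\alpha)}$ with $\alpha < \infty$. This requires either the Frobenius machinery of Huneke--Sharp \cite{HS} in positive characteristic, or Kashiwara's equivalence together with the holonomicity of local cohomology used by Lyubeznik \cite{Ly} in characteristic zero. Once those two ingredients are in hand, the reduction of (a), (b), (c) to them, as sketched above, is essentially formal.
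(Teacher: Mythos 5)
The paper gives no proof of Theorem \ref{3.5}: it simply refers to \cite{HS} and \cite{Ly}. Your sketch correctly identifies the deep input from those references (the $F$-module / holonomic $\mathcal{D}$-module structure and the fact that $\mathfrak m$-torsion objects in those categories are finite direct sums of $E_R(k)$), and the reduction of (a)--(c) to that input via Grothendieck vanishing and the collapse of the composed-functor spectral sequence $\Ext^p_R(k, H^q_{\mathfrak m}(H^i_I(R))) \Rightarrow \Ext^{p+q}_R(k, H^i_I(R))$ is exactly the standard argument, so your approach coincides with what the cited sources do.
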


\begin{proof} For the proof we refer to \cite{HS} and \cite{Ly}.
\end{proof}

In particular the result of Theorem \ref{3.5} (c) is a motivation for the following Definition.

\begin{definition} \label{3.3}
If $I\subset R$ is an ideal of grade $c$ in $(R, \mathfrak m)$ a Gorenstein ring. Then
\[
\tau_{i,j}(I):= \dim_k \Hom_R(k, H^i_{\mathfrak m}(H^{n-j}_I(R)))
\]
is called $\tau$-number of type $(i,j)$ of $I$.
\end{definition}

\begin{remark} \label{3.4}
Note that $\tau_{i,j}(I)$ is the socle dimension of the local
cohomology module $H^i_{\mathfrak{m}}(H^{n-j}_I(R))$. In the case
of a regular local ring $(R,\mathfrak{m})$ containing a field it was
shown by Huneke and Sharp resp. Lyubeznik (see Theorem \ref{3.5})
that
\[
\Ext_R^i(k,H^{n-j}_I(R)) \simeq \Hom_R(k,H^i_{\mathfrak{m}}(H^{n-j}_I(R)))
\]
for all $i, j \in \mathbb{Z}$. These Bass numbers are also called
Lyubeznik numbers. In fact, Lyubeznik showed that they are invariants of $R/I$
and finite.

By the example of Hartshorne (see Example \ref{5.3}) it turns out that for a Gorenstein $(R,
\mathfrak{m})$ the integers $\tau_{i,j}(I)$ need not to be finite.
\end{remark}

As a first result on the integers $\tau_{i,j}(I)$ let us consider its vanishing resp. non-vanishing.

\begin{theorem} \label{3.6} Let $I \subset R$ denote an ideal of the local
Gorenstein ring $R$ and $c = \grade I$.
\begin{itemize}
\item[(a)] $\tau_{d,d}(I) \not= 0$, where $d = \dim R/I$.
\item[(b)] $\tau_{i,j}(I) = 0$ if and only if $H^i_{\mathfrak{m}} (H^{n-j}_I(R)) = 0$, where $n = \dim R$.
\item[(c)] $\tau_{i,j}(I) < \infty$ if and only if $H^i_{\mathfrak{m}} (H^{n-j}_I(R))$ is an Artinian
$R$-module.
\end{itemize}
\end{theorem}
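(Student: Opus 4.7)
The plan is to handle the three parts essentially independently, with part (a) reducing to an earlier result, part (b) a general socle observation, and part (c) a standard essential-extension argument.

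For (a), I would simply read off the conclusion from Theorem \ref{3.2}(c). By Definition \ref{3.3} one has $\tau_{d,d}(I) = \dim_k \Hom_R(k, H^d_{\mathfrak{m}}(H^{n-d}_I(R))) = \dim_k \Hom_R(k, H^d_{\mathfrak{m}}(H^c_I(R)))$ because $d = n - c$. Theorem \ref{3.2}(c) produces a non-zero (hence surjective) map $\psi$ from this very module onto $k$, so the module cannot be zero, and therefore $\tau_{d,d}(I) \geq 1$. That is the whole argument.

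For (b), the key observation is that $H^i_{\mathfrak{m}}(-)$ always takes values in $\mathfrak{m}$-torsion modules, so $M := H^i_{\mathfrak{m}}(H^{n-j}_I(R))$ satisfies $M = \Gamma_{\mathfrak{m}}(M)$. Writing $\Hom_R(k, M) = (0 :_M \mathfrak{m})$, it suffices to show that any non-zero $\mathfrak{m}$-torsion module has non-zero socle: given $0 \neq x \in M$, pick the smallest $n \geq 1$ with $\mathfrak{m}^n x = 0$; then any non-zero element of $\mathfrak{m}^{n-1} x$ lies in $\Hom_R(k, M)$. Thus $\tau_{i,j}(I) = 0$ forces $\Hom_R(k, M) = 0$ and hence $M = 0$, while the converse is trivial.

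For (c), still writing $M = H^i_{\mathfrak{m}}(H^{n-j}_I(R))$, I would prove both implications via the injective hull. If $M$ is Artinian, then $\Hom_R(k, M)$ is an Artinian $k$-vector space and hence finite-dimensional. Conversely, assume $s := \dim_k \Hom_R(k, M) < \infty$. Since $M$ is $\mathfrak{m}$-torsion, the inclusion $\Hom_R(k, M) = \mathrm{Soc}(M) \hookrightarrow M$ is essential, so $M$ embeds into the injective hull $E_R(\mathrm{Soc}(M)) \cong E^{s}$. But $E = E_R(k)$ is an Artinian $R$-module (standard fact for local rings, needed here only for the base case), so $E^{s}$ is Artinian and $M$, as a submodule, is Artinian as well.

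No single step looks like a serious obstacle: (a) is a one-line appeal to the previously established Theorem \ref{3.2}(c); (b) rests on the universal socle argument for $\mathfrak{m}$-torsion modules; and the only point in (c) that requires any care is verifying that the socle of an $\mathfrak{m}$-torsion module is essential, which is exactly the content of $M = \Gamma_{\mathfrak{m}}(M)$ combined with the elementary extraction of a socle element described in (b).
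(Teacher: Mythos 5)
Your proposal is correct and follows the same route as the paper: (a) is the same one-line appeal to Theorem \ref{3.2}(c), (b) is the same socle argument for $\mathfrak{m}$-torsion modules, and (c) is exactly the equivalence (for modules with support in $V(\mathfrak{m})$) between finite socle dimension and Artinianness that the paper cites as well-known; you merely spell out the standard essential-extension/injective-hull proof of that equivalence.
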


\begin{proof}  The statement in (a) is a consequence of Theorem \ref{3.2} (c). For the proof of (b)
first note that any element of $H^i_{\mathfrak{m}} (H^{n-j}_I(R))$ is annihilated by a power of
$\mathfrak{m}$. Then the statement is true by the well-known fact that the module is zero if and only if
its socle vanishes.

In order to prove (c) recall again that $\Supp H^i_{\mathfrak{m}} (H^{n-j}_I(R)) \subseteq V(\mathfrak{m})$. Then the
finiteness of the socle dimension is equivalent to the Artinianness of a module.
\end{proof}

\section{The endomorphism ring}
As above let $I \subset R$ denote an ideal of grade $c$ in the
Gorenstein ring $(R,\mathfrak{m})$. In this section we shall
investigate the natural homomorphism
\[
\varphi_4: \hat{R} \to
\Hom_{\hat{R}}(H^c_{I{\hat{R}}}({\hat{R}}), H^c_{I{\hat{R}}}({\hat{R}})).
\]
As a first step we want to characterize when it will be surjective.
To this end we need a few preparations. For an ideal $I$ let
\[
\Assh R/I = \{\mathfrak{p} \in \Ass R/I | \dim R/\mathfrak{p}
= \dim R/I \}.
\]
Then we define the multiplicatively closed set $S = \cap (R\setminus \mathfrak{p}$),
where the intersection is taken over all $\mathfrak{p}
\in \Assh R/I$. For $n \in \mathbb{N}$ the $n$-th symbolic power
$I^{(n)}$ of $I$ is defined as $I^{(n)} = I^nR_S \cap R$.

\begin{definition} \label{4.1} With the previous notation put
\[
u(I) = \cap_{n \geq 1} I^{(n)} = 0R_S \cap R.
\]
Note that $u(I)$ is equal to the intersection of all $\mathfrak{p}$-primary
components $\mathfrak{q}$ of a minimal primary decomposition of the zero ideal
$0$ in $R$ satisfying $S \cap \mathfrak{p} = \emptyset$ .
\end{definition}

\begin{lemma}\label{4.2}
Let $I\subset R$ is an ideal of grade $c$ in $(R, \mathfrak m)$
a Gorenstein ring. Then $\ker\varphi_4 = u(I\hat{R})$ and
$\Ann_{\hat{R}} H^c_{I\hat{R}}(\hat{R}) = u(I\hat{R})$.
\end{lemma}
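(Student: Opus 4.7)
The plan is as follows. By construction $\varphi_4$ sends $r\in\hat R$ to multiplication by $r$ on $H^c_{I\hat R}(\hat R)$, so $\ker\varphi_4 = \Ann_{\hat R}H^c_{I\hat R}(\hat R)$ and the two stated equalities coincide; hence it is enough to prove $\Ann_{\hat R}H^c_{I\hat R}(\hat R) = u(I\hat R)$. Because $\hat R$ is again Gorenstein and $H^c_{I\hat R}(\hat R) = H^c_I(R)\otimes_R\hat R$ by flatness, I would replace $R$ by its completion and reduce to proving, for a complete Gorenstein local ring $R$, the identity $\Ann_R H^c_I(R) = u(I)$.

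For the inclusion $u(I)\subseteq\Ann_R H^c_I(R)$ I would first use that $H^c_I(R)$ is $I$-torsion and apply Lemma~\ref{2.3} with $X = R/I^n$ to obtain a presentation with injective transition maps,
\[
H^c_I(R) = \varinjlim_n \Hom_R(R/I^n, H^c_I(R)) \simeq \varinjlim_n \Ext^c_R(R/I^n, R),
\]
so that $\Ann_R H^c_I(R) = \bigcap_n \Ann_R \Ext^c_R(R/I^n,R)$. Next, I would exploit the short exact sequence $0 \to I^{(n)}/I^n \to R/I^n \to R/I^{(n)} \to 0$. In the Gorenstein ring $R$ every embedded associated prime $\mathfrak q$ of $I^n$ satisfies $\dim R/\mathfrak q<d$, and the associated primes of $I^{(n)}/I^n$ are precisely those embedded primes; hence $\dim(I^{(n)}/I^n)<d$ and thus $\grade(I^{(n)}/I^n)>c$, which forces $\Ext^{c-1}_R(I^{(n)}/I^n,R) = \Ext^c_R(I^{(n)}/I^n,R) = 0$. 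The long exact $\Ext$-sequence then yields $\Ext^c_R(R/I^{(n)},R)\simeq\Ext^c_R(R/I^n,R)$, and so
\[
I^{(n)} = \Ann_R(R/I^{(n)}) \subseteq \Ann_R\Ext^c_R(R/I^{(n)}, R) = \Ann_R\Ext^c_R(R/I^n, R).
\]
Intersecting over $n$ gives $u(I) = \bigcap_n I^{(n)} \subseteq \Ann_R H^c_I(R)$.

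For the reverse inclusion I would localize at each $\mathfrak p\in\Assh(R/I)$. Since $\mathfrak p$ is a minimal prime over $I$ of height $c$, $IR_{\mathfrak p}$ is $\mathfrak pR_{\mathfrak p}$-primary in the $c$-dimensional Gorenstein local ring $R_{\mathfrak p}$, and therefore
\[
H^c_I(R)_{\mathfrak p} = H^c_{IR_{\mathfrak p}}(R_{\mathfrak p}) = H^c_{\mathfrak pR_{\mathfrak p}}(R_{\mathfrak p}) = E_{R_{\mathfrak p}}(k(\mathfrak p)),
\]
the injective hull of the residue field, whose annihilator as an $R_{\mathfrak p}$-module is zero. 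If $r\in\Ann_R H^c_I(R)$ then the image of $r$ in $R_{\mathfrak p}$ annihilates $E_{R_{\mathfrak p}}(k(\mathfrak p))$ and hence vanishes in $R_{\mathfrak p}$; that is, for every $\mathfrak p\in\Assh(R/I)$ there exists $t_{\mathfrak p}\in R\setminus\mathfrak p$ with $t_{\mathfrak p}r = 0$. Setting $s := \prod_{\mathfrak p\in\Assh(R/I)} t_{\mathfrak p}\in S$ gives $sr=0$, whence $r\in\ker(R\to R_S) = u(I)$.

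The argument is modular. The point that will require the most care is the dimension estimate $\dim(I^{(n)}/I^n) < d$ (equivalently, that every embedded prime of $I^n$ lies strictly below $d$), because it uses that $R$ is catenary and Cohen--Macaulay; once this is in place, the vanishing of $\Ext^{c-1}_R(I^{(n)}/I^n, R)$ and $\Ext^c_R(I^{(n)}/I^n, R)$ follows from the standard grade formula $\grade M = \min\{\height\mathfrak p: \mathfrak p\in\Supp M\}$ in a Cohen--Macaulay ring, and the remaining ingredients (the colimit presentation, the local description of $H^c_I(R)$ at primes of $\Assh(R/I)$, and the combination of the $t_{\mathfrak p}$'s into one $s\in S$) are essentially formal.
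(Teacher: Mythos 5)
Your argument is correct in outline and is a genuinely different route from the paper: the paper gives no proof of this lemma, but simply cites \cite[Theorem 3.2]{Sch5} for $\ker\varphi_4 = u(I\hat R)$ and observes that this equals the annihilator. You instead start from the observation that $\varphi_4$ is multiplication (so the two asserted equalities are one and the same), reduce to a complete Gorenstein $R$, and prove $\Ann_R H^c_I(R)=u(I)$ directly: the inclusion $u(I)\subseteq\Ann$ via the colimit presentation $H^c_I(R)\simeq\varinjlim_n\Ext^c_R(R/I^n,R)$ (from Lemma~\ref{2.3}) combined with the grade estimate $\grade(I^{(n)}/I^n)>c$, and the reverse inclusion by localizing at the primes of $\Assh(R/I)$, where $H^c_I(R)$ becomes the faithful module $E_{R_{\mathfrak p}}(k(\mathfrak p))$. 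This buys a self-contained proof where the paper defers to an external reference; the two ingredients (colimit plus grade, and localization at $\Assh$) are exactly the natural ones.

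There is one genuine, if small, gap at the end of the reverse inclusion. You set $s:=\prod_{\mathfrak p\in\Assh(R/I)}t_{\mathfrak p}$ and assert $s\in S$, but this need not hold: each $t_{\mathfrak p}$ is only guaranteed to avoid its own $\mathfrak p$, and may well lie in a different $\mathfrak q\in\Assh(R/I)$, in which case the product lands in $\mathfrak q$ and hence outside $S=\bigcap_{\mathfrak p\in\Assh(R/I)}(R\setminus\mathfrak p)$. The repair is standard: you have shown $\Ann_R(r)\not\subseteq\mathfrak p$ for every $\mathfrak p\in\Assh(R/I)$, and since $\Assh(R/I)$ is a finite set of primes, prime avoidance produces an $s\in\Ann_R(r)$ with $s\notin\bigcup_{\mathfrak p\in\Assh(R/I)}\mathfrak p$, i.e.\ $s\in S$, whence $r\in\ker(R\to R_S)=u(I)$. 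A second, purely cosmetic point: $\Ass_R(I^{(n)}/I^n)$ is not literally the set of embedded primes of $I^n$ — it may also contain minimal primes of $I$ of height strictly greater than $c$ — but every such prime $\mathfrak q$ still satisfies $\dim R/\mathfrak q<d$ (and every $\mathfrak p\in\Assh(R/I)$ is excluded since $I^{(n)}_{\mathfrak p}=I^n_{\mathfrak p}$), so your dimension bound $\dim(I^{(n)}/I^n)<d$ and the resulting vanishing of $\Ext^{c-1}_R(I^{(n)}/I^n,R)$ and $\Ext^c_R(I^{(n)}/I^n,R)$ are unaffected.
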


\begin{proof} It is known (see \cite[Theorem 3.2]{Sch5}) that
$\ker \varphi_4 = u(I\hat{R})$. Then cleary $u(I\hat{R}) = \Ann_{\hat{R}} H^c_{I\hat{R}}(\hat{R})$.
\end{proof}

It follows that $H^c_{I\hat{R}}(\hat{R})$ is a torsion-free $\hat{R}$-module
if $\hat{R}$ is a domain. In the next we investigate when $\varphi_4$ is
surjective.

\begin{theorem} \label{4.3}
Let $(R, \mathfrak m)$ denote an $n$-dimensional Gorenstein ring. let $I\subset R$
be an ideal and $\grade(I)= c$. Set $d:= n-c$ then the following conditions are equivalent:
\begin{itemize}
\item[(i)] $\tau_{d,d}(I)= 1.$
\item[(ii)] The natural homomorphism $
\Hom_R(k, H^d_{\mathfrak m}(H^c_I(R)))\to k
$
is an isomorphism.
\item[(iii)] The natural homomorphism
$
{\hat{R}}\to \Hom_{\hat{R}}(H^c_{I{\hat{R}}}({\hat{R}}), H^c_{I{\hat{R}}}({\hat{R}}))
$
is surjective.
\end{itemize}
\end{theorem}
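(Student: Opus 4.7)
The plan is to assemble the three equivalences from results already established, principally Theorem \ref{3.2}(b) and (c). Throughout, write $M := H^d_{\mathfrak{m}}(H^c_I(R))$. By Definition \ref{3.3} we have $\tau_{d,d}(I) = \dim_k \Hom_R(k, M)$, and by Theorem \ref{3.2}(c) the natural map $\psi : \Hom_R(k, M) \to k$ is non-zero, hence surjective onto $k$. A surjection onto the one-dimensional $k$-vector space $k$ is an isomorphism precisely when its source is one-dimensional, so (i) and (ii) are equivalent without further work.

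For (ii) $\Leftrightarrow$ (iii) I would route the argument through the map $\varphi_1 : M \to E$ of Lemma \ref{2.4}(a). Theorem \ref{3.2}(b) already provides the bridge to $\varphi_4$: $\varphi_1$ is injective if and only if $\varphi_4$ is surjective, so condition (iii) is equivalent to the injectivity of $\varphi_1$. It therefore suffices to prove that $\varphi_1$ is injective if and only if $\psi$ is an isomorphism. The key point is that $M$ is $\mathfrak{m}$-torsion, being a derived functor of $\Gamma_{\mathfrak{m}}$, so $M$ is an essential extension of its socle $\Hom_R(k, M)$: given any nonzero $x \in M$, choose $t \geq 1$ minimal with $\mathfrak{m}^t x = 0$ and then $r \in \mathfrak{m}^{t-1}$ with $rx \neq 0$, so that $rx$ is a nonzero socle element inside the cyclic submodule $Rx$. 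Consequently a homomorphism out of $M$ is injective if and only if its restriction to the socle is injective. Under the canonical identification $\Hom_R(k, E) = k$ this socle restriction is precisely $\psi$, so $\varphi_1$ is injective iff $\psi$ is injective. Since $\psi$ is already surjective, injectivity and being an isomorphism coincide, which is condition (ii).

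The substance of the theorem has been packaged into Theorem \ref{3.2}(b) and (c); what remains is essentially formal bookkeeping. The only non-routine input is the essential-extension reduction $\varphi_1$ injective $\Leftrightarrow$ $\psi$ injective, which I would regard as the main (though minor) obstacle; it rests purely on the standard fact that any $\mathfrak{m}$-torsion module is an essential extension of its socle, together with the naturality identification of $\psi$ as the socle restriction of $\varphi_1$ via $\Hom_R(k, E) = k$.
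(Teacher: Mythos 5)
Your proof is correct, and it takes a genuinely shorter route to the hardest implication than the paper does. The paper proves (iii)$\Rightarrow$(ii)$\Rightarrow$(i) the same way you do, but for (i)$\Rightarrow$(iii) it does not use essentiality at all: it sets $S=\Hom_R(H^c_I(R),H^c_I(R))\simeq\Hom_R(H^d_{\mathfrak m}(H^c_I(R)),E)$, computes $\dim_k S/\mathfrak m S=1$ from the hypothesis via Matlis duality, shows $S$ is $\mathfrak m$-adically complete by passing to the inverse limit of $S/\mathfrak m^\alpha S\simeq\Hom_R(\Hom_R(R/\mathfrak m^\alpha,H^d_{\mathfrak m}(H^c_I(R))),E)$ and using $H^0_{\mathfrak m}=\mathrm{id}$ on an $\mathfrak m$-torsion module, and then invokes \cite[Theorem 8.4]{M} to conclude $S\simeq R/J$ is cyclic, which is the surjectivity of $\varphi_4$. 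Your replacement of this entire block by the observation that the socle of an $\mathfrak m$-torsion module is essential, so that injectivity of $\varphi_1|_{\mathrm{Soc}}=\psi$ already forces injectivity of $\varphi_1$, followed by Theorem \ref{3.2}(b), is both valid and more economical; the one thing you should make explicit is that the identification $\psi=\Hom_R(k,\varphi_1)$ under $\Hom_R(k,E)=k$ is exactly what the two commutative diagrams preceding Theorem \ref{3.5} encode, and that Theorem \ref{3.2}(b) (whose Matlis-duality proof relies on $M\to M^{\vee\vee}$ being injective because $E$ is an injective cogenerator) is what carries the load of turning $\varphi_1$ injective into $\varphi_4$ surjective. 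What the paper's longer route buys, as a by-product not visible from your argument, is the $\mathfrak m$-adic completeness of the endomorphism ring $S$ and the explicit cyclic presentation $S\simeq R/J$, which is the shape of the statement reused in Theorem \ref{5.2}(c).
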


\begin{proof}
First of all note that we may assume that $R$ is complete without loss of generality.

(iii)$\Rightarrow$(ii): Fix the above notation. If $\varphi_4$ is surjective
it follows (see Theorem \ref{3.2}) that
\[
\varphi_1 : H^d_{\mathfrak m}(H^c_I(R))\to E
\]
is injective. Since $\psi$ is non-zero it provides (by applying $\Hom_R(k,-)$)
that
\[
\psi: \Hom_R(k, H^d_{\mathfrak m}(H^c_I(R)))\to k
\]
is an isomorphism.

(ii)$\Rightarrow$(i): This is obviously true by the definition of $\tau_{d,d}$.

(i)$\Rightarrow$(iii): In order to simplify notation we put $S= \Hom_R(H^c_I(R), H^c_I(R))$.
Then we have to show that the natural homomorphism $R\to S$ is surjective.
Note that $\varphi_4$ is non-zero since $1\to \operatorname{id}$ and $H^c_I(R) \not= 0$. Moreover
\[
S\simeq \Hom_R(H^d_{\mathfrak m}(H^c_I(R)), E)
\]
as follows because of $S\simeq \Ext^c_R(H^c_I(R), R)$ (see Lemma \ref{2.3}) and by
the Local Duality Theorem. Therefore we have isomorphisms
\[
S/\mathfrak mS\simeq k\otimes_R \Hom_R(H^d_{\mathfrak m}(H^c_I(R)), E)\simeq \Hom_R(\Hom_R(k,H^d_{\mathfrak m}(H^c_I(R))), E)
\]
By the assumption and Matlis Duality this implies that $\dim_k S/\mathfrak mS = 1$.

Now we claim that $S$ is $\mathfrak m$-adically complete. With the same duality argument as above there are isomorphisms
\[
S/\mathfrak m^\alpha S\simeq \Hom_R(\Hom_R(R/\mathfrak m^\alpha R,H^d_{\mathfrak m}(H^c_I(R))), E).
\]
for all $\alpha\in \mathbb N$. Now both sides form an inverse system of modules. Since inverse limits commutes with direct limits into $\Hom_R(-,-)$ at the first place. It provides an isomorphism
\[
\varprojlim S/\mathfrak m^\alpha S\simeq \Hom_R(H^0_{\mathfrak m}(H^d_{\mathfrak m}(H^c_I(R))), E)
\]
as follows by passing to the inverse limit. But $H^0_{\mathfrak m}(H^d_{\mathfrak m}(H^c_I(R)))\simeq
H^d_{\mathfrak m}(H^c_I(R))$ since $H^d_{\mathfrak m}(H^c_I(R))$ is an $R$-module whose
support is contained in $V(\mathfrak m).$
Finally this provides an isomorphism
\[
\lim_{\longleftarrow} S/\mathfrak m^\alpha S\simeq S
\]
and hence $S$ is $\mathfrak m$-adically complete.

By virtue of \cite[Theorem 8.4]{M} $S$ is a finitely generated $R$-module, in fact a cyclic
module $S \simeq R/J$. This proves the statement in (iii).
\end{proof}

As shown above there is the following commutative diagram
\[
\begin{array}{ccc}
  \Ext^d_R(k, H^c_I(R)) &  \stackrel{\varphi}{\to} & k \\
  \lambda \downarrow &   & \| \\
   \Hom_R(k, H^d_{\mathfrak m}(H^c_I(R))) & \stackrel{\psi}{\to} & k.
\end{array}
\]
Whence there is a relation between the Bass number $\mu_d(\mathfrak m, H^c_I(R))$ and $\tau_{d,d}(I)$.
In fact, if $(R, \mathfrak m)$ is regular local ring containing a field, then $\lambda$ is an
isomorphism (see \cite{Sch5}).
It is conjectured (\cite{HeS}) that $\varphi$ is non-zero.
Note that if $\varphi$ is non-zero then $\psi$ is non-zero too and both maps are surjective.
In the next we want to continue with the investigations of $\varphi$.

\begin{theorem} \label{4.4}
Let $(R, \mathfrak m)$ be Gorenstein ring. Suppose that $I\subset R$ is an ideal of
grade $c$ such that $\inj \dim_R(H^c_I(R))\leq d = n-c$.
Suppose that the natural homomorphism
\[
\varphi: \Ext^d_R(k, H^c_I(R))\to k
\]
is non-zero (resp. isomorphic). Then
\[
\varphi_4: {\hat{R}}\to \Hom_{\hat{R}}(H^c_{I{\hat{R}}}({\hat{R}}), H^c_{I{\hat{R}}}({\hat{R}}))
\]
is injective (resp. isomorphic).
\end{theorem}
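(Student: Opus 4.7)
My plan is to analyze the commutative diagram
\[
\begin{array}{ccc}
\Ext^d_R(k, H^c_I(R)) & \stackrel{\varphi}{\to} & k \\
\lambda \downarrow & & \| \\
\Hom_R(k, H^d_{\mathfrak m}(H^c_I(R))) & \stackrel{\psi}{\to} & k
\end{array}
\]
established earlier and to use the injective-dimension hypothesis to force $\lambda$ to be an isomorphism; once this is in hand, Theorems \ref{3.2} and \ref{4.3} will yield the conclusion.

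The decisive technical point is to prove that $\lambda$ is an isomorphism whenever $\inj\dim_R H^c_I(R) \leq d$. For this I would fix a minimal injective resolution $0 \to H^c_I(R) \to J^0 \to \cdots \to J^d \to 0$, which terminates in degree $d$ by hypothesis. Minimality forces the differentials of $\Hom_R(k, J^\bullet)$ to vanish, so $\Ext^d_R(k, H^c_I(R)) \simeq \Hom_R(k, J^d) \simeq k^{\mu_d}$ with $\mu_d := \mu_d(\mathfrak m, H^c_I(R))$; applying $\Gamma_{\mathfrak m}$ gives a complex with $\Gamma_{\mathfrak m}(J^j) \simeq E^{\mu_j}$. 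Running the spectral sequence
\[
E_2^{p,q} = \Ext^p_R(k, H^q_{\mathfrak m}(H^c_I(R))) \Rightarrow \Ext^{p+q}_R(k, H^c_I(R))
\]
and exploiting the vanishing of both $\Ext^{>d}_R(k, H^c_I(R))$ (from the injective-dimension hypothesis) and $H^{>d}_{\mathfrak m}(H^c_I(R))$ (since $\dim_R H^c_I(R) \leq d$), one identifies $\lambda$ with the top edge homomorphism and shows it is bijective. Equivalently, the top differential $\Gamma_{\mathfrak m}(J^{d-1}) \to \Gamma_{\mathfrak m}(J^d)$ is zero, and hence $H^d_{\mathfrak m}(H^c_I(R)) \simeq E^{\mu_d}$ is an injective $\hat{R}$-module.

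Granted $\lambda$ is an isomorphism, the factorization $\varphi = \psi \circ \lambda$ shows that $\varphi$ is non-zero (resp.~an isomorphism) if and only if $\psi$ is. In the isomorphism case, $\tau_{d,d}(I) = 1$ and Theorem \ref{4.3} produces surjectivity of $\varphi_4$. For injectivity, Theorem \ref{3.2}(b) reduces the problem to showing that $\varphi_1: H^d_{\mathfrak m}(H^c_I(R)) \to E$ is surjective. Using $H^d_{\mathfrak m}(H^c_I(R)) \simeq E^{\mu_d}$, the map $\varphi_1$ corresponds to a tuple $(r_1,\ldots,r_{\mu_d}) \in \hat R^{\mu_d}$ via $\Hom_{\hat R}(E, E) \simeq \hat R$. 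The non-vanishing of $\psi$, which by the iso $\lambda$ is equivalent to the non-vanishing of $\varphi$, forces some $r_i \notin \mathfrak m \hat R$, i.e.\ some $r_i$ is a unit in $\hat R$; then $r_i \cdot E = E$ and $\varphi_1$ is surjective. Combining, under $\varphi \neq 0$ the map $\varphi_4$ is injective, and under $\varphi$ an isomorphism it is both injective and surjective, hence an isomorphism.

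The main obstacle is this first step---establishing bijectivity of $\lambda$ under $\inj\dim_R H^c_I(R) \leq d$. The spectral-sequence degeneration, equivalently the vanishing of $\Gamma_{\mathfrak m}$ applied to the top differential of the minimal injective resolution, is the delicate point; it is the mechanism that matches the top Bass number of $H^c_I(R)$ with the socle of $H^d_{\mathfrak m}(H^c_I(R))$ and thereby forces the latter to be a direct sum of copies of $E$.
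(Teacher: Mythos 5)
Your plan is genuinely different from the paper's argument, but it rests on a claim that is not justified and is almost certainly false in the stated generality.

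The paper's proof does \emph{not} try to show that the comparison map $\lambda : \Ext^d_R(k,H^c_I(R)) \to \Hom_R(k, H^d_{\mathfrak m}(H^c_I(R)))$ is an isomorphism. Instead it works with the whole family of maps $\varphi_\alpha : \Ext^d_R(R/\mathfrak m^\alpha, H) \to \Hom_R(R/\mathfrak m^\alpha, E)$ with $H = H^c_I(R)$. The hypothesis $\inj\dim_R H \le d$ is used at a very precise point: it forces $\Ext^{d+1}_R(-,H)=0$, so the connecting map $\Ext^d_R(R/\mathfrak m^{\alpha+1},H) \to \Ext^d_R(\mathfrak m^\alpha/\mathfrak m^{\alpha+1},H)$ is surjective. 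The snake lemma applied to the short exact sequences $0\to\mathfrak m^\alpha/\mathfrak m^{\alpha+1}\to R/\mathfrak m^{\alpha+1}\to R/\mathfrak m^\alpha\to 0$ then propagates surjectivity (resp.~bijectivity) from $\varphi_1 = \varphi$ to all $\varphi_\alpha$ by induction; passing to the direct limit gives $H^d_{\mathfrak m}(H) \to E$ surjective (resp.~iso), and Theorem~\ref{3.2}(b) finishes.

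The gap in your proposal is the "decisive technical point" that $\lambda$ is bijective, equivalently that $\Gamma_{\mathfrak m}$ kills the top differential $J^{d-1}\to J^d$ of a minimal injective resolution of $H$, equivalently that $H^d_{\mathfrak m}(H)\simeq E^{\mu_d}$ is injective. Minimality says that $\Hom_R(k,-)$ of the differential vanishes, i.e.\ the matrix entries (under $\Gamma_{\mathfrak m}(J^{i-1})\simeq E^{\mu_{i-1}}\to E^{\mu_i}\simeq\Gamma_{\mathfrak m}(J^i)$, $\Hom_{\hat R}(E,E)\simeq\hat R$) lie in $\mathfrak m\hat R$; but since $E$ is divisible, entries in $\mathfrak m$ can still give a nonzero, even surjective, map $E\to E$. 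So minimality does \emph{not} force $\Gamma_{\mathfrak m}$ of the differential to vanish. The spectral sequence $E_2^{p,q}=\Ext^p_R(k,H^q_{\mathfrak m}(H))\Rightarrow\Ext^{p+q}_R(k,H)$ does not help either: $\inj\dim H\le d$ and $\dim H\le d$ only control vanishing for total degree $>d$ and for $q>d$; they do not kill the differentials $d_r:E_r^{0,d}\to E_r^{r,d-r+1}$ nor force $E_\infty^{p,d-p}=0$ for $p>0$, so the edge map onto $E_2^{0,d}$ need be neither injective nor surjective. In fact, if your claim were correct, then $\varphi=\psi\circ\lambda$ would automatically be nonzero whenever $\inj\dim H\le d$, since Theorem~\ref{3.2}(c) says $\psi$ is always nonzero; this would resolve the Hellus--Schenzel conjecture in that setting, which the paper clearly does not assert and poses as an open hypothesis. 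That mismatch is a strong internal signal that the bijectivity of $\lambda$ cannot be extracted from the hypotheses alone. The isomorphism $\Ext_R^i(k,H^{n-j}_I(R))\simeq\Hom_R(k,H^i_{\mathfrak m}(H^{n-j}_I(R)))$ and the injectivity of $H^j_{\mathfrak m}(H^i_I(R))$ are known for regular local rings containing a field (Theorem~\ref{3.5}) via $D$-module or Frobenius methods, not as a formal consequence of a bound on injective dimension. You should instead pursue the paper's direct inductive argument, which uses the injective-dimension bound exactly where it has force: to make $\Ext^d_R(-,H)$ right exact.
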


\begin{proof}
Since $\hat{R}$ is a flat $R$-module, the injective dimension of $H^c_I(R)$
is finite if and only if the injective dimension of $H^c_{I\hat{R}}(\hat{R})$
is finite. Moreover $\varphi$ is non-zero (resp. isomorphic) if and only
if $\Ext^d_{\hat{R}}(k,H^c_{I\hat{R}}(\hat{R}) )\to k$ is
non-zero (resp. isomorphic). So without loss of generality we may assume that
$R$ is complete.

It is a consequence of the truncation complex that there are natural homomorphisms
\[
\varphi_\alpha: \Ext^d_R(R/\mathfrak m^\alpha, H^c_I(R))\to \Hom_R(R/\mathfrak m^\alpha, E)
\]
for all $\alpha\in \mathbb N$.

Put $H = H^c_I(R)$. Then the short exact sequence
\[
0\to \mathfrak m^\alpha/\mathfrak m^{\alpha+1}\to R/\mathfrak m^{\alpha+1}\to R/\mathfrak m^{\alpha}\to 0
\]
induces a commutative diagram with exact rows
\[
\begin{array}{cccccccc}
   &  & \Ext^d_R(R/\mathfrak m^\alpha, H) & \to & \Ext^d_R(R/\mathfrak m^{\alpha+1}, H) & \to & \Ext^d_R(\mathfrak m^{\alpha}/\mathfrak m^{\alpha+1}, H) & \to 0\\
    &   & \downarrow {\varphi_\alpha }&  & \downarrow {\varphi_{\alpha+1}} &   & \downarrow f  &\\
 0 & \to & \Hom_R(R/\mathfrak m^\alpha, E) & \to & \Hom_R(R/\mathfrak m^{\alpha+1}, E) & \to & \Hom_R(\mathfrak m^\alpha/\mathfrak m^{\alpha+1}, E) &\to 0
\end{array}
\]
Recall that the homomorphism $\Ext^d_R(R/\mathfrak m^{\alpha+1}, H) \to
\Ext^d_R(\mathfrak m^{\alpha}/\mathfrak m^{\alpha+1}, H)$ is onto since $\inj \dim
H \leq d.$

Now suppose that $\varphi$ is non-zero, that is, it is surjective. So assume that
$\varphi$ is surjective (resp. isomorphic). Then the natural homomorphism $f$
is surjective (resp. isomorphic). The snake lemma provides by induction on
$\alpha \in \mathbb{N}$ that $\varphi_{\alpha}$ is surjective
(resp.  isomorphic). By passing to the direct limit of the direct systems it induces
the homomorphism
\[
\varphi_1 : H^d_{\mathfrak m}(H^c_I(R))\to E
\]
is surjective (resp. isomorphic). Note that the direct limit is exact on direct systems.
So the statement follows by virtue of Theorem
\ref{3.2}
\end{proof}

\begin{remark} \label{4.5}
(A) The assumption $\inj \dim H^c_I(R) \leq d$ implies that $\inj \dim H^c_I(R) = d$.
This follows since $d = \dim H^c_I(R) \leq \inj \dim H^c_I(R)$ as it is easily
seen.

(B) In the case of a regular local ring $(R,\mathfrak{m})$ containing a field
it is shown (see Theorem \ref{3.5}) that $\inj \dim H^c_I(R) \leq d$. We do not know
whether this holds in any regular local ring.
\end{remark}

\section{The \texorpdfstring{$\tau$}{tau}-numbers}
In the following we shall give two applications concerning the $\tau$-numbers of
the ideal $I \subset R$ of a Gorenstein ring $(R,\mathfrak{m})$. To this end
let us recall the following result:

\begin{lemma} \label{5.1} Let $(R,\mathfrak{m})$ denote an $n$-dimensional Gorenstein
ring. Let $I \subset R$ denote an ideal with $c = \grade I$. Let $C_R^{\cdot}(I)$ denote
the truncation complex of $I$ (see Definition \ref{2.2}). Then there is a short exact
sequence
\[
0 \to H^{n-1}_{\mathfrak m}(C^{\cdot}_R(I)) \to H^d_{\mathfrak m}(H^c_I(R)) \to E \to H^n_{\mathfrak m}(C^{\cdot}_R(I)) \to 0,
 \]
isomorphisms $H^{i-c}_{\mathfrak m}(H^c_I(R)) \simeq H^{i-1}_{\mathfrak m}(C^{\cdot}_R(I))$ for $i < n$ and the vanishing $H^{i-c}_{\mathfrak m}(H^c_I(R)) = 0$ for $i > n.$
\end{lemma}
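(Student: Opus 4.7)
The plan is to apply the derived functor $\Rgam$ to the defining short exact sequence of complexes
\[
0 \to H^c_I(R)[-c] \to \Gamma_I(E^{\cdot}_R(R)) \to C^{\cdot}_R(I) \to 0
\]
of Definition \ref{2.2} and read off all three assertions from the resulting long exact sequence of hyper-cohomology.

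The key preparatory step is computing the hyper-cohomology of the middle term. Because $E^{\cdot}_R(R)$ is a complex of injective $R$-modules and $\Gamma_I$ preserves injectivity, $\Gamma_I(E^{\cdot}_R(R))$ is itself a complex of injectives, so $\Rgam$ is computed on it by applying $\Gamma_{\mathfrak m}$ directly. Using the functorial identity $\Gamma_{\mathfrak m}\circ \Gamma_I = \Gamma_{\mathfrak m}$ (valid because $I\subseteq \mathfrak m$) together with the Gorenstein hypothesis, one obtains $\Gamma_{\mathfrak m}(\Gamma_I(E^{\cdot}_R(R))) = \Gamma_{\mathfrak m}(E^{\cdot}_R(R))$, a complex representing $\Rgam(R) \simeq E[-n]$. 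Hence $H^i_{\mathfrak m}(\Gamma_I(E^{\cdot}_R(R))) = E$ for $i = n$ and vanishes otherwise. The left-hand term is immediate: $H^i_{\mathfrak m}(H^c_I(R)[-c]) = H^{i-c}_{\mathfrak m}(H^c_I(R))$.

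Plugging these into the long exact sequence
\[
\cdots \to H^{i-1}_{\mathfrak m}(C^{\cdot}_R(I)) \to H^{i-c}_{\mathfrak m}(H^c_I(R)) \to H^i_{\mathfrak m}(\Gamma_I(E^{\cdot}_R(R))) \to H^i_{\mathfrak m}(C^{\cdot}_R(I)) \to \cdots
\]
yields all three claims. For $i < n$ both flanking $\Gamma_I(E^{\cdot}_R(R))$-terms vanish, and the surrounding four-term piece collapses to the required isomorphism $H^{i-c}_{\mathfrak m}(H^c_I(R)) \simeq H^{i-1}_{\mathfrak m}(C^{\cdot}_R(I))$. Splicing the LES across the two critical indices $i = n-1$ and $i = n$ produces the four-term exact sequence displayed in the lemma, provided the term $H^{d+1}_{\mathfrak m}(H^c_I(R))$ that would appear on the right actually vanishes.

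That final vanishing, as well as the blanket statement $H^{i-c}_{\mathfrak m}(H^c_I(R)) = 0$ for $i > n$, follows from Grothendieck vanishing combined with the dimension bound $\dim_R H^c_I(R) \le d$, itself immediate from $\Supp_R H^c_I(R) \subseteq V(I)$. The only bookkeeping item is verifying that the defining sequence is degree-wise exact so that $\Rgam$ produces a genuine distinguished triangle, which is immediate from the construction of $C^{\cdot}_R(I)$ as a cokernel of a morphism of complexes. I do not anticipate any substantive obstacle; the lemma is a routine consequence of the truncation complex definition and standard homological algebra, with the only mild care being the index bookkeeping at $i = n-1, n$.
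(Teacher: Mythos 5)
Your proof is correct and follows the same route as the cited source \cite[Lemma~2.2]{HeS}: apply $\Rgam$ to the defining short exact sequence of the truncation complex, compute $\Rgam(\Gamma_I(E^{\cdot}_R(R))) \simeq \Gamma_{\mathfrak m}(E^{\cdot}_R(R)) \simeq E[-n]$ using $\Gamma_{\mathfrak m}\circ\Gamma_I = \Gamma_{\mathfrak m}$ and the Gorenstein hypothesis, and read off the isomorphisms and the four-term exact sequence from the long exact cohomology sequence, with $H^{d+1}_{\mathfrak m}(H^c_I(R)) = 0$ closing it up by Grothendieck vanishing. The index bookkeeping is handled correctly throughout.
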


\begin{proof} For the proof we refer to \cite[Lemma 2.2]{HeS}. It is easily seen a
consequence of the truncation complex.
\end{proof}

\begin{theorem} \label{5.2} Let $I \subset R$ denote a 1-dimensional ideal in the
Gorenstein ring $(R,\mathfrak{m})$.
\begin{itemize}
\item[(a)] $\tau_{i,j}(I) = 0$ for all $(i,j) \not\in \{(0,0), (1,1)\}$.
\item[(b)] $\tau_{0,0}(I) = \dim_k \Ext_R^1(k,H^1_{\mathfrak{m}}(H^{n-1}_I(R))) < \infty$ and
$\tau_{1,1}(I) = 1$.
\item[(c)] $\Hom_{\hat{R}}(H^c_{I{\hat{R}}}({\hat{R}}), H^c_{I{\hat{R}}}({\hat{R}})) \simeq \hat{R}/u(I\hat{R})$.
\end{itemize}
\end{theorem}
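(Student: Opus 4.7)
The plan is to exploit the fact that when $d = \dim R/I = 1$ (so $c = n-1$), the truncation complex $C^{\cdot}_R(I)$ has at most one non-vanishing cohomology module, namely $H^n(C^{\cdot}_R(I)) \simeq H^n_I(R)$. Moreover, Grothendieck vanishing yields $H^n_I(R)_\mathfrak{p} = H^n_{IR_\mathfrak{p}}(R_\mathfrak{p}) = 0$ for every $\mathfrak{p} \neq \mathfrak{m}$, since $\dim R_\mathfrak{p} < n$. Hence $H^n_I(R)$ is $\mathfrak{m}$-torsion, and the hypercohomology spectral sequence for $\Rgam$ applied to $C^{\cdot}_R(I)$ collapses, giving $H^i_{\mathfrak m}(C^{\cdot}_R(I)) = 0$ for $i \neq n$ and $H^n_{\mathfrak m}(C^{\cdot}_R(I)) \simeq H^n_I(R)$.

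Feeding this into Lemma \ref{5.1} has two immediate consequences. First, the isomorphism at $i = n-1$ forces $H^0_{\mathfrak m}(H^{n-1}_I(R)) \simeq H^{n-2}_{\mathfrak m}(C^{\cdot}_R(I)) = 0$, while the four-term sequence from the lemma collapses to the short exact sequence
\[
0 \to H^1_{\mathfrak m}(H^{n-1}_I(R)) \to E \to H^n_I(R) \to 0.
\]
Combined with the grade vanishing $H^{n-j}_I(R) = 0$ for $j > 1$ and the $\mathfrak{m}$-torsion of $H^n_I(R)$ (which kills $H^i_{\mathfrak m}(H^n_I(R))$ for $i \geq 1$), this accounts for every $\tau_{i,j}(I) = 0$ outside $\{(0,0),(1,1)\}$, proving part (a).

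For part (b), I would apply $\Hom_R(k, -)$ to the displayed short exact sequence. Using $\Hom_R(k, E) = k$ and $\Ext^1_R(k, E) = 0$, the leading map $\Hom_R(k, H^1_{\mathfrak m}(H^{n-1}_I(R))) \to k$ in the resulting long exact sequence coincides with the map $\psi$ of Theorem \ref{3.2}(c); since $\psi$ is non-zero and the map is already injective, it is an isomorphism. This gives $\tau_{1,1}(I) = 1$, and the tail of the sequence produces $\Hom_R(k, H^n_I(R)) \simeq \Ext^1_R(k, H^1_{\mathfrak m}(H^{n-1}_I(R)))$, which is the asserted identity for $\tau_{0,0}(I)$; finiteness follows because $H^n_I(R)$ is Artinian as a quotient of $E$. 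For part (c), (b) supplies $\tau_{d,d}(I) = 1$, so Theorem \ref{4.3} makes $\varphi_4$ surjective while Lemma \ref{4.2} identifies $\ker \varphi_4$ with $u(I\hat R)$, yielding the stated isomorphism.

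The main obstacle is conceptual rather than technical: recognizing the extreme degeneracy of $C^{\cdot}_R(I)$ in codimension one (a single $\mathfrak{m}$-torsion cohomology module) and using it to collapse the four-term sequence of Lemma \ref{5.1} into a usable short exact sequence relating $H^1_{\mathfrak m}(H^{n-1}_I(R))$, $E$, and $H^n_I(R)$. Once this is set up, parts (a), (b), (c) follow one after another from the machinery of Theorems \ref{3.2} and \ref{4.3}.
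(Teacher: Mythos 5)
Your proof is correct and follows essentially the same route as the paper: collapsing $H^i_{\mathfrak m}(C^{\cdot}_R(I))$ to $H^{i-n}_{\mathfrak m}(H^n_I(R))$, invoking that $H^n_I(R)$ is $\mathfrak{m}$-torsion to reduce Lemma \ref{5.1} to the short exact sequence $0 \to H^1_{\mathfrak m}(H^{n-1}_I(R)) \to E \to H^n_I(R) \to 0$, applying $\Hom_R(k,-)$ together with Theorem \ref{3.2}(c), and finishing (c) via Theorem \ref{4.3} and Lemma \ref{4.2}. The only cosmetic differences are that you phrase the collapse via the hypercohomology spectral sequence where the paper uses the quasi-isomorphism $C^{\cdot}_R(I) \to H^n_I(R)[-n]$, and you observe directly that $H^n_I(R)$ is Artinian as a quotient of $E$ rather than citing the external reference.
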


\begin{proof} Since $H^i(C_R^{\cdot}(I)) \simeq H^i_I(R)$ for all $c < i \leq n$ and zero
otherwise there is a naturally defined morphism $C_R^{\cdot}(I) \to H^n_I(R)[-n]$ that induces
an isomorphism in cohomology. Therefore $H^i_{\mathfrak{m}}(C_R^{\cdot}(I)) \simeq H^{i-n}_{\mathfrak{m}}(H^n_I(R))$
for all $i \in \mathbb{Z}$. Whence there are an exact sequence
\[
0 \to H^{-1}_{\mathfrak{m}}(H^n_I(R)) \to H^1_{\mathfrak{m}}(H^{n-1}_I(R)) \to E \to H^0_{\mathfrak{m}}(H^n_I(R))
\to 0 \]
and isomorphisms $H^{i-n}_{\mathfrak{m}}(H^n_I(R)) \simeq H^{i+2-n}_{\mathfrak{m}}(H^{n-1}_I(R))$ for all $i \not= n, n-1$
as it is a consequence of Lemma \ref{5.1}. This provides all the vanishing of the $\tau$-numbers as claimed
in the statement. To this end recall that $\dim H^i_I(R) \leq n-i$ for $i = n-1, n$.
Moreover it implies a short exact sequence
\[
0 \to H^1_{\mathfrak{m}}(H^{n-1}_I(R)) \to E \to H^0_{\mathfrak{m}}(H^n_I(R)) \to 0.
\]
By applying $\Ext_R^{\cdot}(k, -)$ it induces
isomorphisms
\[
\Hom_R(k,H^1_{\mathfrak{m}}(H^{n-1}_I(R))) \simeq k \text{ and }
\Hom_R(k,H^0_{\mathfrak{m}}(H^n_I(R))) \simeq \Ext_R^1(k,H^1_{\mathfrak{m}}(H^{n-1}_I(R))).
\]
To this end recall the the natural homomorphism $\Hom_R(k,H^1_{\mathfrak{m}}(H^{n-1}_I(R))) \to k$
is not zero (see Theorem \ref{3.5} (c)). Because $H^n_I(R)$ is an Artinian $R$-module (see e.g \cite{MS}) we have $H^0_{\mathfrak{m}}(H^n_I(R)) = H^n_I(R)$ and its socle dimension is finite. This proves the statements in (a) and (b). The claim in (c) is a particular case of Theorem \ref{4.3}.
\end{proof}

In his paper (see \cite{Bl}) Blickle has shown a certain duality statement for Lyubeznik
numbers in the case of cohomologically isolated singularities in a regular local ring containing a field (see also \cite[Corollary 5.4]{Sch5}
for a slight extension). Here we prove a corresponding result for the $\tau$-numbers for an ideal in a Gorenstein ring.

\begin{theorem} \label{5.3} Let $I \subset R$ denote an ideal of grade $c$ in the Gorenstein ring
$(R,\mathfrak{m})$. Suppose that $c < n-1$ and $\Supp H^i_I(R) \subseteq V(\mathfrak{m})$ for all $i \not= c$.
Then
\[
\tau_{0,j}(I) = \tau_{d-j+1,d}(I) - \delta_{d-j+1,d},
\] where $d = \dim R/I$.
\end{theorem}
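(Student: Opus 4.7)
The plan is to use the truncation complex $C_R^{\cdot}(I)$ of Definition \ref{2.2} together with Lemma \ref{5.1}, and to exploit the support hypothesis in order to collapse a spectral sequence and reduce the statement to a direct comparison of modules.

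First I would observe that under the assumption $\Supp H^i_I(R)\subseteq V(\mathfrak{m})$ for $i\neq c$, every non-trivial cohomology module $H^q(C^{\cdot}_R(I))\simeq H^q_I(R)$ (for $c<q\leq n$) is already $\mathfrak{m}$-torsion. Hence the hyper-cohomology spectral sequence
\[
E_2^{p,q} = H^p_{\mathfrak{m}}(H^q(C^{\cdot}_R(I))) \Rightarrow H^{p+q}_{\mathfrak{m}}(C^{\cdot}_R(I))
\]
collapses onto the column $p=0$, giving $H^p_{\mathfrak{m}}(C^{\cdot}_R(I))\simeq H^p_I(R)$ for $c<p\leq n$ and zero otherwise.

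Next, for $1<j<d$, I would substitute $i=n-j+1$ into the isomorphism $H^{i-c}_{\mathfrak{m}}(H^c_I(R))\simeq H^{i-1}_{\mathfrak{m}}(C^{\cdot}_R(I))$ of Lemma \ref{5.1} (note $c<i<n$), and then combine with the collapse above to obtain
\[
H^{d-j+1}_{\mathfrak{m}}(H^c_I(R))\simeq H^{n-j}_I(R).
\]
Taking socle dimensions gives $\tau_{d-j+1,d}(I)=\tau_{0,j}(I)$, and $\delta_{d-j+1,d}=0$ since $d-j+1\neq d$ in this range. For $j=d$ the same identifications yield $H^1_{\mathfrak{m}}(H^c_I(R))\simeq H^c_{\mathfrak{m}}(C^{\cdot}_R(I))=0$ and $H^0_{\mathfrak{m}}(H^c_I(R))\simeq H^{c-1}_{\mathfrak{m}}(C^{\cdot}_R(I))=0$, so both $\tau$-numbers vanish and the formula again reads $0=0$.

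The decisive case is $j=1$. Here Lemma \ref{5.1} furnishes the four-term exact sequence
\[
0\to H^{n-1}_I(R)\to H^d_{\mathfrak{m}}(H^c_I(R))\xrightarrow{\varphi_1} E\to H^n_I(R)\to 0,
\]
once the outer terms are identified via the collapse. Setting $K=\operatorname{im}(\varphi_1)$ and applying $\Hom_R(k,-)$, the composition
\[
\Hom_R(k,H^d_{\mathfrak{m}}(H^c_I(R)))\to \Hom_R(k,K)\hookrightarrow \Hom_R(k,E)=k
\]
is exactly the map $\psi$ of Theorem \ref{3.2}(c), and $\psi\neq 0$. This forces $\Hom_R(k,K)=k$ and makes the first arrow surjective; the long exact sequence attached to $0\to H^{n-1}_I(R)\to H^d_{\mathfrak{m}}(H^c_I(R))\to K\to 0$ then splits into a short exact sequence on socles, giving
\[
\tau_{d,d}(I)=\tau_{0,1}(I)+1=\tau_{0,1}(I)+\delta_{d,d}.
\]

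The main obstacle is precisely this $j=1$ step: to upgrade the obvious inequality $\tau_{d,d}(I)\leq \tau_{0,1}(I)+1$ to equality one really needs the non-vanishing of $\psi$ provided by Theorem \ref{3.2}(c), which pins down the extra socle dimension coming from the copy of $E$ in the middle of the four-term sequence. Once that input is available, the remaining argument is straightforward diagram chasing.
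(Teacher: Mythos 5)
Your proposal is correct and follows the same route as the paper: under the support hypothesis the hypercohomology spectral sequence attached to the truncation complex collapses, giving $H^i_{\mathfrak{m}}(C^{\cdot}_R(I)) \simeq H^i(C^{\cdot}_R(I)) \simeq H^i_I(R)$ for $c < i \leq n$ (and zero otherwise), and then Lemma \ref{5.1} translates this into the desired equalities of $\tau$-numbers for $1 \leq j \leq d$. The paper's own proof is quite terse, in particular for the decisive $j=1$ case it simply writes the exact sequence $0 \to H^{n-1}_I(R) \to H^d_{\mathfrak{m}}(H^c_I(R)) \to E$ and asserts $\tau_{d,d}(I) = \tau_{0,1}(I)+1$ "as above" without further comment. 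Your elaboration — factoring $\varphi_1$ through its image $K \subseteq E$, identifying the composite $\Hom_R(k,H^d_{\mathfrak{m}}(H^c_I(R))) \to \Hom_R(k,K) \hookrightarrow k$ with $\psi$, and invoking Theorem \ref{3.2}(c) to force $\Hom_R(k,K) = k$ and surjectivity of the map of socles — is exactly the content the paper leaves implicit, and you are right to flag it as the one non-formal step: without the non-vanishing of $\psi$ one would only obtain $\tau_{d,d}(I) \leq \tau_{0,1}(I)+1$. Your treatment of the cases $1<j<d$ and $j=d$ via the isomorphisms of Lemma \ref{5.1} is also correct and agrees with the intended argument.
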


\begin{proof} Under the assumption of $\Supp H^i_I(R) \subseteq V(\mathfrak{m})$ for all $i \not= c$ it follows that
there are isomorphisms $H^i_{\mathfrak{m}}(C^{\cdot}_R(I)) \simeq H^i(C^{\cdot}_R(I))$ for all $i \in
\mathbb{Z}$. The exact sequence and the isomorphisms shown in Lemma \ref{5.1} provide
the statement of the claim. Note that we have an exact sequence
\[
0 \to H^{n-1}_I(R) \to H^d_{\mathfrak{m}}(H^c_I(R)) \to E.
\]
This in particular provides (as above) that $\tau_{d,d}(I) = \tau_{0,1}(I) + 1$.
\end{proof}

Even under the assumptions of Theorem \ref{5.3} it may happen that $\tau_{d,d}$ is infinite. To this end
consider the following example of Hartshorne (see \cite{Ha}).

\begin{example} \label{5.4} (cf. \cite[Section 2]{Ha} and \cite[Example 3.5]{Sch5})
Let $k$ be a field and let
$A = k[|u,v,x,y|]$ be the formal power series ring in four
variables. Consider the complete Gorenstein ring $R = A/fA,$
where $f = xv-yu.$ Let $I = (x,y)R.$ Then $n = 3, d= 2, c= 1$ and
$\Supp H^2_I(R) = V(\mathfrak{m}), H^3_I(R) = 0$. It was shown by
Hartshorne (see \cite[Section 2]{Ha}) that
\[
\dim_k \Hom_R(k, H^2_I(R)) = \tau_{0,1}(I) = \tau_{2,2}(I) -1
\]
is not finite.
\end{example}

\begin{problem}
It would be of some interest to get a criterion for $\tau_{d,d}(I)$ to be finite
for an ideal $I \subset R$ with $d = \dim R/I$. By Theorem \ref{3.6} this is equivalent to
the Artinianness of $H^d_{\mathfrak{m}}(H^c_I(R))$.
\end{problem}

\noindent\textbf{Acknowledgement.} The authors are grateful to the reviewer for suggestions to
improve the manuscript.


\begin{thebibliography}{9999}
\bibitem[l]{Bl} {\sc M.~Blickle:}  Lyubeznik's invariants for
cohomologically isolated singularities, J. Algebra 308 (2007),
118-123.

\bibitem [2]{BS} {\sc M. Brodmann, R. Sharp:} Local Cohomology. An Algebraic
Introduction with Geometric Applications. Cambr. Stud. in Advanced Math., No. 60. Cambridge University Press, (1998).

\bibitem[3]{MS} {\sc M. Eghbali, P. Schenzel:} On an endomorphism ring of local cohomology, Comm. Algebra, to
appear.

\bibitem[4]{Gr} {\sc A. Grothendieck:} Local Cohomology, Notes by R. Hartshorne, Lecture Notes in
Math. vol. 41, Springer, 1967.

\bibitem[5]{Ha} {\sc R.~Hartshorne:}  Affine duality and
cofiniteness, Invent. Math.  9, 145-164 (1970).


\bibitem[6]{HeS} {\sc M. Hellus, P. Schenzel:} On cohomologically complete intersections,
J. Algebra 320 (2008) 3733-3748.

\bibitem[7]{HSt} {\sc M. Hellus, J. St\"uckrad:} On endomorphism rings of local cohomology modules.
Proc. Am. Math. Soc. 136 (2008) 2333-2341.

\bibitem[8]{HH} {\sc M.~Hochster, C.~Huneke:}  Indecomposable canonical
modules and connectedness, In: Commutative Algebra: Syzygies, Multiplicities, and Birational Algebra,  (Eds.:
W. Heinzer, C. Huneke, J. Sally), Contemporary Math. 159, pp.  197-208, Providence,
1994.

\bibitem[9]{Hu} {\sc C. Huneke :} Lectures on local cohomology (with an Appendix by Amelia Taylor), Contemp. Math. 436, pp. 51-100, Providence, 2007.


\bibitem[10]{HS} {\sc C. Huneke, R.Y. Sharp:} Bass numbers of local cohomology modules, Trans. Amer. Math. Soc. 339 (1993) 765779.

\bibitem[11]{Ly} {\sc G. Lyubeznik:} Finiteness properties of local cohomology modules
(an Application of D-modules to Commutative
Algebra), Invent. Math. 113 (1993) 41-55.

\bibitem[12]{M} {\sc H. Matsumura:} Commutative Ring Theory, Cambridge Studies in Advanced Mathematics 8, Cambridge University Press, Cambridge (1986).


\bibitem[13]{Sch1} {\sc P. Schenzel:} On the use of local cohomology in Algebra and Geometry. In: Six Lectures in
Commutative Algebra, Proceed. Summer School on Commutative Algebra at Centre de
Recerca Matem\`{a}tica, (Ed.: J. Elias, J. M. Giral, R. M. Mir\'{o}-Roig, S. Zarzuela), Progr. Math. 166, pp. 241-292, Birkh\"auser, 1998.

\bibitem[14]{Sch4} {\sc P. Schenzel:} Proregular sequences,
local cohomology, and completion, Math. Scand 92 (2003) 161-180.

\bibitem[15]{Sch2} {\sc P.~Schenzel:}  On birational
Macaulayfications and Cohen-Macaulay canonical modules. J.
Algebra 275 (2004), 751-770.

\bibitem[16]{Sch3} {\sc P. Schenzel:} Matlis duals of local
cohomology modules and their endomorphism rings. Arch. Math.
(Basel) 95 (2010) 115-123.

\bibitem[17]{Sch5} {\sc P. Schenzel:} On the structure of
the endomorphism ring of a certain local cohomology module.
J. Algebra 344 (2011), 229-245.

\end{thebibliography}
\end{document}